\documentclass[12pt]{article}
\usepackage{amsmath,amssymb,dsfont,tikz-cd,amsthm,inputenc}

\title{A Categorical Notion of Precompact Expansions}
\author{Keegan Dasilva Barbosa}
\date{February 26, 2020}

\renewcommand{\AA}{\mathbf{A}}
\newcommand{\BB}{\mathbf{B}}
\newcommand{\CC}{\mathbf{C}}
\newcommand{\GG}{\mathbf{G}}

\newcommand{\FF}{\mathcal{F}}

\newcommand{\NN}{\mathbb{N}}

\newcommand{\QQ}{\mathbb{Q}}
\newcommand{\SSSS}{\mathbf{S}}

\newcommand{\KK}{\mathcal{K}}
\newcommand{\XX}{\mathbf{X}}
\newcommand{\YY}{\mathbf{Y}}
\newcommand{\ZZ}{\mathbb{Z}}
\newcommand{\PP}{\mathbb{P}}

\newcommand{\Gra}{\mathbf{Gra}}
\newcommand{\LOGra}{\mathbf{LOGra}}

\newtheorem{theorem}{Theorem}[section]
\newtheorem{prop}{Proposition}
\newtheorem{corollary}{Corollary}[theorem]
\newtheorem{lemma}[theorem]{Lemma}
\theoremstyle{definition}
\newtheorem{definition}{Definition}[section]

\DeclareMathOperator{\Age}{Age}
\DeclareMathOperator{\Aut}{Aut}

\begin{document}
\maketitle
\begin{abstract}
	We generalize the notion of relational precompact expansions of Fra\"iss\'e classes via functorial means, inspired by the technique outlined by Laflamme, Nguyen Van Th\'e and Sauer in their paper \textit{Partition properties of the dense local order and a colored version of Milliken's theorem}. We also generalize the expansion property and prove that categorical precompact expansions grant upper bounds for Ramsey degrees. Moreover, we show under strict conditions, we can also compute big Ramsey degrees. We also apply our methodology to calculate the big and little Ramsey degrees of the objects in $\Age(\SSSS(n))$ for all $n\geq 2$. 
\end{abstract}
\section{Introduction}
In recent years, it has become quite apparent that topological dynamics and Ramsey theory are inseparable from one another. A great example of this was the KPT correspondence in \cite{KPT}, where it was proven that a Fra\"iss\'e class of rigid structures $\KK$ has the Ramsey property if and only if the group $G=\Aut(\FF)$ was extremely amenable, where $\FF$ is the corresponding Fra\"iss\'e structure. Moreover, the paper outlined a way to compute the universal minimal flow explicitly, given the class had the order property. This was then elaborated on further in \cite{PrecompExp}, where it was shown that precompact relational expansions could also be used to compute the universal minimal flow. Moreover, it was shown in \cite{MetUniv} that precompact relational expansions are optimal, in that whenever $M(G)$ is metrizable, a precompact relational expansion of rigid structures with the Ramsey Property exists. The main drawback to precompact relational expansions however, is that of language. Given a class $\KK$, one needs to add relations $R_i$ to its language $L$. However, which relations to add may not be clear, as the structures themselves may be quite complex. Category theory on the other hand, has proven to be quite useful at proving Ramsey theoretic results by relating structures, despite possibly having different languages. One can see instances of this in \cite{Kubis} and \cite{BigRamsey}. This begs the questions, is there a reasonable notion of precompact relational expansion in the categorical setting? Could such a notion provide a direct link between category and topological dynamics? We assert a positive answer to both these questions. Moreover, we prove the following result. 
\begin{theorem}
	Let $\KK$ be a Fra\"iss\'e class with Fra\"iss\'e limit $\FF$ and $G= \Aut(\FF) $. The following are equivalent. 
	\begin{enumerate}
		\item $\KK$ admits a categorical precompact expansion $\KK^*$ with finite Ramsey degrees. 
		\item Every object in $\KK$ has a finite Ramsey degree.
		\item $M(G)$ is metrizable. 
	\end{enumerate}
\end{theorem}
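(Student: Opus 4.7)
The plan is to establish the cycle $(1) \Rightarrow (2) \Rightarrow (3) \Rightarrow (1)$. The equivalence $(2) \Leftrightarrow (3)$ is not novel content here: it is Zucker's metrizability theorem from \cite{MetUniv} (combined with the KPT correspondence of \cite{KPT}), which characterizes metrizability of $M(G)$ for $G = \Aut(\FF)$ precisely by finiteness of Ramsey degrees in $\KK$. So the real task is to slot the paper's categorical notion of precompact expansion into this pre-existing equivalence.

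For $(1) \Rightarrow (2)$ I would invoke what the abstract announces as the paper's main technical result, namely that a categorical precompact expansion with finite Ramsey degrees yields finite upper bounds on Ramsey degrees in $\KK$. Concretely: given $\AA \in \KK$, precompactness should supply a finite list $\AA^*_1, \dots, \AA^*_k$ of expansions of $\AA$ in $\KK^*$, and any finite coloring of copies of $\AA$ inside some $\BB \in \KK$ should pull back, along the forgetful functor, to a coloring of copies of each $\AA^*_i$ inside some expansion of $\BB$. Combining a Ramsey-degree bound for each $\AA^*_i$ in $\KK^*$ with a pigeonhole iteration over the finitely many expansion types then produces a finite Ramsey degree for $\AA$ in $\KK$.

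For $(3) \Rightarrow (1)$ I would again appeal to \cite{MetUniv}: metrizability of $M(G)$ produces a precompact relational expansion $\KK^*$ of $\KK$ by rigid structures with the Ramsey property. Since any relational precompact expansion sits inside the categorical framework via the forgetful functor from expanded to reduced structures, and since the Ramsey property forces every Ramsey degree in $\KK^*$ to equal $1$, this same $\KK^*$ witnesses (1).

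The main obstacle is $(1) \Rightarrow (2)$: one must check that the pullback-and-pigeonhole strategy familiar from the relational case survives the jump to an abstract functorial setup, where no syntactic handle on ``colored expansions of a given structure'' is available. I expect the generalized expansion property announced in the abstract, together with finiteness of the fibers of the forgetful functor $\KK^* \to \KK$ (the categorical substitute for precompactness), to be what rescues the argument; the remaining implications are genuinely shorter because they either delegate to \cite{MetUniv} or amount to observing that the new framework specializes to the old one.
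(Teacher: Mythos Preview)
Your cycle $(1)\Rightarrow(2)\Rightarrow(3)\Rightarrow(1)$ is correct and your sketch of $(1)\Rightarrow(2)$ matches the paper's Theorem~4.1 essentially verbatim: pull the coloring back along the forgetful functor, use (Ref) to see that every copy of $\AA$ is supported by a copy of some $\AA^i$, and iterate over the finitely many expansion types. Your $(3)\Rightarrow(1)$ is also what the paper uses to close the loop, relying on the remark in Section~3.1 that any relational precompact expansion is automatically a categorical one.

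The point of departure is $(1)\Rightarrow(3)$. You obtain it as the composite $(1)\Rightarrow(2)\Rightarrow(3)$, invoking the Zucker/Melleray--Nguyen Van Th\'e--Tsankov result for the second arrow. The paper deliberately avoids this route: its stated aim is to prove $(1)\Rightarrow(3)$ \emph{directly}, ``without trying to rely too heavily on the equivalence $2\iff 3$.'' This is the entire content of Section~3.2: the functor $U$ is extended to $\sigma\KK^*\to\sigma\KK$, shown to send the Fra\"iss\'e sequence for $\FF^*$ to one for $\FF$, and then used to produce a closed continuous embedding $\Aut(\FF^*)\hookrightarrow\Aut(\FF)$ with precompact quotient, from which metrizability of $M(G)$ follows. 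Your argument is shorter and logically sufficient, but it treats the categorical expansion as a mere bookkeeping device en route to (2); the paper's argument buys an explicit topological-dynamical interpretation of the functor itself, which is the advertised payoff of the categorical framework.
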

The equivalence of $2$ and $3$ has become a well known result in the study of structural Ramsey Theory, with proofs appearing in \cite{MetUniv} and \cite{Zucker}. Our main goal will be to prove both $1\Rightarrow 2$ and $1\Rightarrow 3$ to show the interconnectedness of category theory with combinatorics and topological dynamics, without trying to rely too heavily on the equivalence $2\iff 3$. By doing so, we hope to further establish the significance of category theory in the field and show it too is a major central component to structural Ramsey theory. Finally, we show an application. In particular, we prove the following theorem. 
\begin{theorem}
	For any $n\geq 2$, the class $\Age(\SSSS(n)) $ has finite Ramsey degrees. Moreover, for any $\AA \in \Age(\SSSS(n))$ 
	\begin{itemize}
		\item $t(\AA) = \frac{n|A|}{|\Aut(\AA) |} $. 
		\item $T(\AA) = t(\AA) \tan^{(2|A|-1)} (0) $ 
	\end{itemize}
\end{theorem}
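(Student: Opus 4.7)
The plan is to construct a categorical precompact expansion $\KK^{*}$ of $\KK = \Age(\SSSS(n))$ that enjoys the Ramsey property, then invoke Theorem 1 to deduce finite Ramsey degrees together with the fiber-count upper bound, and finally bootstrap to the big Ramsey degree by proving a colored Milliken-type theorem in the spirit of Laflamme, Nguyen Van Th\'e, and Sauer.

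First I would define the expansion by adorning each $\AA \in \KK$ with the data needed to single out a concrete embedding into $\SSSS(n)$ up to the natural symmetries: a choice of basepoint among the $|A|$ elements and a choice among the $n$ arcs into which the universe of $\SSSS(n)$ is partitioned. Two such expanded structures are identified if they differ by an automorphism of $\AA$, so the fiber of the forgetful functor $\KK^{*} \to \KK$ over $\AA$ has cardinality exactly $n|A|/|\Aut(\AA)|$. This gives a candidate value for $t(\AA)$ as soon as $\KK^{*}$ is shown to satisfy the categorical precompactness hypotheses laid out earlier in the paper.

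Next I would verify the Ramsey property for $\KK^{*}$. After choosing a basepoint and sector one can unfold the cyclic order into a linear order on $A$, so objects of $\KK^{*}$ amount to linearly ordered structures enriched with auxiliary sector labels; the Ramsey property for such classes should follow either by a direct Ne\v{s}et\v{r}il--R\"odl-type amalgamation or by pulling back the Ramsey property along the known argument for the dense local order. Theorem 1 then supplies $(1) \Rightarrow (2)$, whence $\KK$ has finite Ramsey degrees with $t(\AA) \leq n|A|/|\Aut(\AA)|$. The matching lower bound comes from the separating coloring: color each copy of $\AA$ in $\SSSS(n)$ by the isomorphism type of its induced expansion, so that no two distinct expansion classes can be monochromatic in the same copy.

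Finally, for $T(\AA)$ I would adapt the colored Milliken theorem from the cited Laflamme--Nguyen Van Th\'e--Sauer work to the $n$-ary setting. Embeddings of $\AA$ into $\SSSS(n)$ are encoded by certain enriched trees whose nodes carry sector labels, and a counting argument decomposes the big Ramsey type into two independent contributions: the little Ramsey factor $t(\AA)$ and the big Ramsey degree of an $|A|$-element chain in $(\QQ,<)$, which by Devlin's theorem equals the tangent number $\tan^{(2|A|-1)}(0)$. The main obstacle, as I see it, is the matching lower bound for $T(\AA)$: producing explicit unavoidable colorings requires a delicate tree-coding analogous to Devlin's strong subtree argument in $\QQ$, suitably enriched by the sector data. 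I expect the upper bound and the overall categorical framework to go through routinely, with the bulk of the technical work concentrated in this unavoidability step.
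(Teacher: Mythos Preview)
Your expansion class---linearly ordered structures with sector labels---is exactly $\Age(\QQ_n)$, the class of $n$-partitioned linear orders, which is what the paper uses, and its Ramsey property is already known. So the backbone of your argument and the paper's coincide. Two points of divergence are worth flagging.

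For the lower bound $t(\AA)\geq m(\AA)$, your separating coloring by induced expansion type is the right idea, but as stated it is incomplete: you must also exhibit some $\BB$ with the property that \emph{every} expansion of $\BB$ contains a copy of \emph{every} expansion type of $\AA$; otherwise some copy of $\BB$ inside $\CC$ might simply miss an expansion class and your coloring would not force all $m(\AA)$ colors. The paper handles this via an explicit family $\CC_m$ of evenly spaced circular configurations, computing their automorphism groups to show they have exactly $n$ expansions and that each is rich enough to embed any partitioned order of size at most $m$.

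For $T(\AA)$ you are planning substantially more work than the paper does. You anticipate redoing a colored Milliken argument with tree encodings and isolating the unavoidability step as the hard part. The paper bypasses all of this by invoking its general Theorem~4.3: once one checks that $\QQ_n$ is the \emph{unique} expansion of $\SSSS(n)$ (equivalently $\binom{\QQ_n}{\QQ_n}=\binom{\SSSS(n)}{\SSSS(n)}$), that theorem gives $T_{\KK}(\AA)=\sum_{i<m(\AA)} T_{\KK^*}(\AA^i)$ with matching upper and lower bounds simultaneously. Since every $\AA^i\in\Age(\QQ_n)$ has the same big Ramsey degree $\tan^{(2|A|-1)}(0)$ (a known fact cited from the Laflamme--Nguyen Van Th\'e--Sauer paper), the formula $T(\AA)=m(\AA)\tan^{(2|A|-1)}(0)$ drops out immediately. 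The only new verification is the uniqueness of the expansion, which the paper does by running its reversal procedure on the full structure $\SSSS(n)$ and observing that each arc, after rotation, is order-dense without endpoints. So the unavoidability you worry about is already packaged inside the categorical machinery and need not be reproved.
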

It was shown in \cite{ColouredMilliken} that this is true for $n=2$. The methods we will use to prove this result are reminiscent of \cite{ColouredMilliken}, where the authors arguably used a categorical precompact expansion, without explicitly defining such machinery. 
\\
\\
The paper will be split in to 4 core components. Section 2 will serve as preliminaries for the field of structural Ramsey theory. It serves to showcase all the relevant theorems and definitions needed to understand the remaining sections. Section 3 will be where we define a categorical precompact expansion. It will also be where we link category theory to topological dynamics and prove $1 \Rightarrow 3$ in relation to theorem 1.1. Section 4 will be where we show how to use a categorical precompact expansion to extract bounds on Ramsey degrees. Consequently, this is where we will complete the proof of theorem 1.1. Finally, section 5 will be where we prove theorem 1.2.    
\section{Preliminaries}
This section will be split in to three parts, Fra\"iss\'e theory, Ramsey theory and category theory. They will serve as a brief introduction to the three fields and can be skipped if the reader is already comfortable with them.
\subsection{Category Theory}
We start with the standard definition of a category.
\begin{definition}
	A category $\mathcal{C}$ is a class of objects $\textbf{Ob}(\mathcal{C})$ and a class of morphisms between objects $\text{hom}(\mathcal{C})$ that satisfy
	\begin{itemize}
		\item $\forall \AA,\BB,\CC \in \textbf{Ob}(\mathcal{C})$, there is a binary relation $\circ: \text{hom}(\AA,\BB) \times \text{hom}(\BB,\CC) \rightarrow \text{hom}(\AA,\CC)$ called a composition.
		\item Composition is associative ($(f\circ g)\circ h = f\circ(g\circ h)$ ).
		\item $\forall \AA\in \text{Ob}(\mathcal{C})$, $\exists 1_{\AA} \in \text{hom}(\AA,\AA)$ such that $\forall \BB, \CC \in \text{Ob}(\mathcal{C})$ $\forall f\in \text{hom}(\AA,\BB)$ $\forall g\in \text{hom}(\CC,\AA)$, $1_{\AA} \circ g= g$ and $f\circ 1_{\AA} = f$. 
	\end{itemize} 
\end{definition}
We will be interested entirely in categories of finite and countable models over a fixed signature $\mathcal{L}$ which contains only countably many relation symbols. Models will always be expressed as boldfaced capital letters, with their underlying universe expressed as the unbolded letter eg. $\AA$ and $A$. All categories will have their morphisms $\text{hom}(\AA,\BB)$ defined to be the functions satisfying
\begin{align*}
f:A\rightarrow B\\
R_i^{\AA}(x_1,...,x_{a(i)}) &\iff R_i^{\BB}(f(x_1),...,f(x_{a(i)}))
\end{align*}
For finite structures, $\text{hom}(\AA,\AA) = \Aut(\AA)$, the collection of all automorphisms. Given a model $\BB$ and $A\subseteq B$, we define $\BB\upharpoonright A $ to be the model induced by $\BB$ restricted to $A$. Given two models over the same signature $\AA$ and $\BB$, we define 
\begin{align*}
\binom{\BB}{\AA} &= \{\BB\upharpoonright A^{'} : A^{'} \subseteq B, \; \BB\upharpoonright A^{'} \cong \AA  \}
\end{align*}
Note that $\binom{\BB}{\AA}$ can be realized as an equivalence class over hom-sets. Namely, the relation $\sim $ on $\text{hom}(\AA,\BB)$ defined by $f\sim h  \iff \exists g\in \text{Aut}(\AA), $ $f\circ g =h $ is an equivalence relation and
\begin{align*}
\text{hom}(\AA,\BB)/\sim &= \binom{\BB}{\AA}
\end{align*}
It will be necessary for us to relate different categories of structures, and to do so, we will need functors. Functors are the most natural way to relate categories to one another.
\begin{definition}
	Suppose $\mathcal{C}$ and $\mathcal{D}$ are categories. A functor is a map $F$ that sends objects from $\mathcal{C}$ to objects from $\mathcal{D}$, sends morphisms from $\text{hom}_{\mathcal{C}}(\AA,\BB)$ to morphisms from $\text{hom}(F(\AA),F(\BB)) $ in such a manner that $F(f\circ g) = F(f)\circ F(g) $ and $F(1_{\AA}) = 1_{F(\AA)}$.  
\end{definition}
\subsection{Fra\"iss\'e Theory}  
A central notion in Fra\"iss\'e Theory is that of the Age of a structure. An age of a structure is the category of all finite induced substructures. That is
\begin{align*}
	\Age(\BB) &= \{\AA: \binom{\BB}{\AA} \neq \emptyset, \;\; |A| \text{  finite.}     \}
\end{align*}
Interestingly enough, Ages are uniquely defined by categories that satisfy a type of upward and downward closure and witness a countable skeleton i.e one only needs countably many structures to define all structures up to isomorphism.
\begin{definition}
	Suppose $\KK$ is a class of finite structures over the same signature $\mathcal{L}$. We say that $\KK$ is an Age if and only if
	\begin{itemize}
		\item $\KK$ has the Heriditary Property (HP). That is, for any $\BB \in \KK$, if $\AA$ embeds in to $\BB$, then $\AA \in \KK$.
		\item $\KK$ has the Joint Embedding property (JEP). If $\AA, \BB \in \KK$, there is a $\CC \in \KK$ for which $\AA, \BB \in \Age(\CC)$. 
		\item There are only countably many nonisomorphic structures in $\KK$.
	\end{itemize}
\end{definition}
A consequence of this definition is that $\KK$ is an Age if and only if there is a countable structure $\FF$ for which $\KK = \Age(\FF)$. That is, there is a structure which is universal over $\KK$. A great example of such classes are the class of all finite graphs $\Gra$ and the class of all linearly ordered graphs $\LOGra$. Of course, the existence of a universal structure on its own, does not tell us much about an Age. For example, both the Random graph and a disjoint union of all complete graphs are universal over $\Gra$. But the Random graph is distinct from a countable union of complete graphs. Namely, the Random graph has a high level of categoricity which is a consequence of a property called ultrahomogeneity.
\begin{definition}
	We say a structure $\FF$ is ultrahomogeneous when for any $\AA \in \Age(\FF)$ and any pair $f,g \in \text{hom}(\AA,\FF)$, there is an automorphism $h\in \Aut(\FF)$ such that $h\circ f = g $. 
\end{definition}
This leads us to the main Theorem of this section, which is a classical result of Fra\"iss\'e. An age that satisfies an added upward closure condition.
\begin{definition}
	We say a class of structures satisfies the Amalgamation Property (AP) if $\forall \AA,\BB_1,\BB_2 \in \KK$ and embeddings $f_i:\AA\rightarrow \BB_i$, there is a $\CC \in \KK$ and embeddings $g_i : \BB_i \rightarrow \CC$ such that $g_1\circ f_1 = g_2\circ f_2$.
\end{definition}
\begin{theorem}
	\textbf{(Fra\"iss\'e)} Suppose $\KK$ is an Age with AP. There is a unique up to isomorphism, ultrahomogeneous structure $\FF$ such that $\KK = \Age(\FF)$. Similarly, if $\FF$ is ultrahomogeneous, then $\Age(\FF)$ satisfies AP. 
\end{theorem}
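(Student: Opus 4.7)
The plan is to construct $\FF$ as the direct limit of a chain $\FF_0 \subseteq \FF_1 \subseteq \cdots$ drawn from $\KK$, carefully arranged so that $\FF = \bigcup_n \FF_n$ is both universal and ultrahomogeneous for $\KK$. Uniqueness will then follow from a standard back-and-forth argument, and the converse implication (ultrahomogeneous $\Rightarrow$ AP) will be a short diagram chase inside $\FF$.

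For the construction, fix an enumeration of isomorphism types $\AA_0, \AA_1, \ldots$ in $\KK$, which exists because $\KK$ has only countably many nonisomorphic members. Starting from any $\FF_0 \in \KK$, I build $\FF_{n+1}$ from $\FF_n$ by alternating between two kinds of tasks. At universality stages, I use JEP to find $\FF_{n+1} \in \KK$ containing both $\FF_n$ and a copy of the next $\AA_k$, ensuring every member of $\KK$ embeds into some $\FF_n$. At homogeneity stages, I process the next pending triple $(\AA, f, g)$ with $f, g : \AA \to \FF_n$; amalgamating $f$ and $g$ over $\AA$ via AP produces $\FF_{n+1} \supseteq \FF_n$ together with embeddings identifying $f(\AA)$ and $g(\AA)$ inside $\FF_{n+1}$. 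Standard bookkeeping ensures every such triple is addressed at a cofinal stage. HP forces $\Age(\FF) = \KK$, and chaining the amalgamations gives that every finite partial isomorphism of $\FF$ extends to $\Aut(\FF)$, i.e., ultrahomogeneity.

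Uniqueness is by back-and-forth: given another ultrahomogeneous $\FF'$ with $\Age(\FF') = \KK$, enumerate the universes $F = \{a_i\}$ and $F' = \{b_i\}$ and build an increasing sequence of finite partial isomorphisms $\sigma_n$, alternately extending the domain to include $a_n$ and the range to include $b_n$. Each one-point extension is possible because both structures have age $\KK$ (so the one-point extension type is realized) and are ultrahomogeneous (so we can then slide the realization into position). The union $\bigcup_n \sigma_n$ is the required isomorphism. For the converse, given $\AA, \BB_1, \BB_2 \in \Age(\FF)$ with embeddings $f_i : \AA \to \BB_i$, fix embeddings $h_i : \BB_i \to \FF$; by ultrahomogeneity there is $\alpha \in \Aut(\FF)$ with $\alpha \circ h_1 \circ f_1 = h_2 \circ f_2$, and taking $\CC$ to be the finite substructure of $\FF$ spanned by $\alpha(h_1(B_1)) \cup h_2(B_2)$ supplies the required amalgam together with the maps $\alpha \circ h_1$ and $h_2$.

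The main obstacle is the bookkeeping in the forward construction: one must interleave the universality and homogeneity tasks so that every isomorphism type of $\KK$ and every pair of embeddings that eventually appears is handled at a cofinal set of stages, and at each homogeneity stage AP must be applied to exactly the right span $\FF_n \leftarrow f(\AA) \cong g(\AA) \rightarrow \FF_n$. Once this scheduling is fixed, each individual step is a direct invocation of JEP, AP, or HP, and the back-and-forth for uniqueness is routine.
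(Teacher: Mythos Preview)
The paper does not prove this theorem at all: it is Fra\"iss\'e's classical result, stated without proof in the preliminaries (Section~2.2) and used as background. So there is no in-paper argument to compare against.

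Your sketch follows the standard route and is essentially correct, but two points in the construction deserve tightening. First, your description of the homogeneity stages is slightly off. Amalgamating the span $\FF_n \xleftarrow{f} \AA \xrightarrow{g} \FF_n$ does not ``identify'' $f(\AA)$ and $g(\AA)$ inside $\FF_{n+1}$; it yields an embedding $h:\FF_n \to \FF_{n+1}$ (distinct from the inclusion) with $h\circ g = f$ on $\AA$. That is a partial isomorphism extended one step, not an automorphism, and you still need a back-and-forth to chain such steps into an element of $\Aut(\FF)$. The cleaner and more standard organization is to build $\FF$ so that it has the \emph{extension property}: for every $\AA \subseteq \BB$ in $\KK$ and every embedding $e:\AA \to \FF_n$, some $\FF_m \supseteq \FF_n$ admits $e':\BB \to \FF_m$ extending $e$. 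This uses AP on the span $\FF_n \leftarrow \AA \hookrightarrow \BB$, and ultrahomogeneity then follows by a separate back-and-forth using only the extension property. Second, your bookkeeping has a subtle circularity: the triples $(\AA,f,g)$ you want to enumerate depend on the $\FF_n$ you are still constructing, so they cannot be listed in advance. The usual fix is to enumerate abstract pairs $\AA \subseteq \BB$ from a countable skeleton of $\KK$ and, at each stage, process the finitely many embeddings of $\AA$ into the current $\FF_n$.

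Your uniqueness back-and-forth and your converse argument (ultrahomogeneity $\Rightarrow$ AP) are correct as written.
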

So in the case of the class $\Gra$, the Random graph is the unique ultrahomogeneous universal structure. Objects of this type are central to the modern study of structural Ramsey theory.
\subsection{Ramsey Theory}
The classical Ramsey theorem states that for any $n,m$ and $k$ in $\NN$, there is an $N\in \NN$ such that for any colouring $\chi: [N]^m \rightarrow k$, there is a $M\subseteq N$ of size $n$ for which $\chi \upharpoonright{[M]^m } $ is constant. We work with a generalization of this. That is,
\begin{definition}
	We say a class $\KK$ has the Ramsey Property (RP) if for all $\AA, \BB \in \KK$, $k\in \NN$, there is a $\CC \in \KK$ such that $\forall \chi : \binom{\CC}{\AA} \rightarrow k$ $\exists \BB^{'} \in \binom{\CC}{\BB}$ for which $ \chi \upharpoonright{\binom{\BB^{'}}{\AA}}$ is constant. We often write this with the Rado notation $\CC \rightarrow (\BB)_k^{\AA}$.  
\end{definition} 
Note, it is not always the case that a class has the Ramsey property. However, for any class $\KK$, every element has a Ramsey degree.
\begin{definition}
	Let $\KK$ be a class of finite structures. For any $\AA\ \in \KK$, we define the the Ramsey degree of $\AA$ to be $t_{\KK} (\AA) = \text{min}\{t \in \NN : \exists \CC \in \KK \; \CC\rightarrow (\BB)_{k,t}^{\AA}  \}  $. If the minima does not exist,we set $t_{\KK}(\AA) = \infty$. 
\end{definition}
In the above definition, the statement $\CC\rightarrow (\BB)_{k,t}^{\AA} $ is the exact same statement as $ \CC\rightarrow (\BB)_{k}^{\AA}$, except now we allow our colouring to take at most $t$ many values (opposed to being constant). In this sense, $\CC\rightarrow (\BB)_{k}^{\AA} $ is equivalent to $\CC\rightarrow (\BB)_{k,1}^{\AA}$. There is also an infinite dimensional version of Ramsey degree called a big Ramsey degree. 
\begin{definition}
	Let $\FF$ be a countably infinite structure and set $\KK = \text{Age}(\FF)$. For all $\AA\in \KK$, we declare the big Ramsey degree of $\AA$ to be $ T_{\KK}(\AA) =\text{min}\{ t: \FF \rightarrow (\FF)_{k,t}^{\AA}    \} $ or $\infty$ if the minima does not exist. 
\end{definition}
Note, the above definition is not a very well founded one. Namely, many universal structures can exist, so which one are we interested in the big Ramsey degrees of? For the purpose, of this paper our classes will be Fra\"iss\'e, and $\FF $ will denote the unique ultrahomogeneous structure. \\
\\
This brings us to KPT theory which introduces topological dynamics to the equation. Let us define $ S_\infty$ to be the permutation group of $\NN$ endowed with the topology of point-wise convergence. We have the following classification results
\begin{definition}
	A subgroup $G\subseteq S_\infty$ is closed if and only if it is isomorphic to $ \Aut(\FF)$ for a Fra\"iss\'e structure $\FF$. 
\end{definition}
It was also shown in [1] that RP for a Fra\"iss\'e class is equivalent to the extreme amenability of the automorphism group.
\begin{definition}
	Suppose $G$ is a topological group. We say that it is extremely amenable if whenever $G$ acts on a compact Hausdorff space $X$, it admits a fixed point.
\end{definition}
\begin{theorem}
	\textbf{(KPT Theorem)} Suppose $\FF$ is a Fra\"iss\'e structure. Let $\KK=\Age(\FF)$ and $G= \Aut(\FF)$. Then the following are equivalent. 
	\begin{itemize}
		\item $\KK$ is a class of rigid structures and has RP.
		\item $G$ is extremely amenable. 
	\end{itemize}
\end{theorem}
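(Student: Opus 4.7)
The plan is to treat the two implications separately, in each case translating the combinatorial statement into a fixed-point statement for an appropriate $G$-flow.

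For $(2) \Rightarrow (1)$, assume $G$ is extremely amenable. To obtain rigidity, I would apply extreme amenability to the closed $G$-invariant subspace of $2^{\FF \times \FF}$ consisting of all linear orders on $\FF$, yielding a $G$-invariant linear order $<$. Restricting $<$ to any $\AA \in \KK$ and combining with ultrahomogeneity gives $\Aut(\AA) = \{\mathrm{id}\}$, since every order-preserving automorphism of a finite linearly ordered set is trivial. For RP, fix $\AA, \BB \in \KK$, $k \in \NN$, and $\chi: \binom{\FF}{\AA} \to k$, and consider the compact $G$-flow $k^{\binom{\FF}{\AA}}$ with its natural continuous action. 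Extreme amenability produces a fixed point $\chi^* \in \overline{G \cdot \chi}$, which is constant with some value $c$ since $G$ acts transitively on $\binom{\FF}{\AA}$ by ultrahomogeneity. For any chosen $\BB_0 \in \binom{\FF}{\BB}$, the definition of the product topology combined with $\chi^* \in \overline{G \cdot \chi}$ yields $g \in G$ with $\chi(g^{-1}\AA') = c$ for every $\AA' \in \binom{\BB_0}{\AA}$, so $g^{-1}\BB_0$ is a monochromatic copy of $\BB$ in $\FF$; the finite Ramsey statement $\CC \to (\BB)^\AA_k$ then follows by a standard compactness argument.

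For $(1) \Rightarrow (2)$, assume $\KK$ is rigid and has RP. I use the criterion that a $G$-flow $X$ has a fixed point iff for every $x_0 \in X$, finite $F \subseteq G$, and finite open cover $\mathcal V$ of $X$, there exist $g_0 \in G$ and $V \in \mathcal V$ with $F g_0 x_0 \subseteq V$; this follows by checking that the closed sets $\{y \in X : Fy \subseteq \overline{V} \text{ for some } V \in \mathcal V\}$ have the finite intersection property under the above assumption, and that any element of the total intersection must be fixed by Hausdorffness. Given such $x_0, F, \mathcal V$, refine $\mathcal V$ to an open cover $\mathcal U$ whose closures refine $\mathcal V$. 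By continuity of the action together with compactness of $X$, find a finite $A_0 \subseteq \FF$ such that for every $y \in X$ the orbit $G_{A_0} y$ is contained in some $U \in \mathcal U$, where $G_{A_0} = \{g \in G : g \upharpoonright A_0 = \mathrm{id}\}$. Set $\AA_0 = \FF \upharpoonright A_0$ and $\CC = \FF \upharpoonright \left(A_0 \cup \bigcup_{f \in F} f^{-1}(A_0)\right)$.

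For each $\AA' \in \binom{\FF}{\AA_0}$, rigidity gives a unique isomorphism $\AA_0 \to \AA'$, which ultrahomogeneity extends to some $\tilde g \in G$, well-defined modulo $G_{A_0}$; the orbit $G_{A_0} \tilde g^{-1} x_0$ depends only on the left coset $\tilde g G_{A_0}$, so we obtain a well-defined finite coloring $\chi : \binom{\FF}{\AA_0} \to \mathcal U$ by sending $\AA'$ to any $U$ containing $G_{A_0} \tilde g^{-1} x_0$. Applying RP to $(\chi, \CC)$ yields $g_0 \in G$ such that $g_0^{-1} \CC$ is monochromatic of some colour $U$. For each $f \in F$, the subcopy $g_0^{-1} f^{-1}(A_0)$ of $g_0^{-1} C$ admits the canonical extension $\tilde g = g_0^{-1} f^{-1}$, so $\tilde g^{-1} x_0 = f g_0 x_0 \in G_{A_0} \tilde g^{-1} x_0 \subseteq U$. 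Hence $F g_0 x_0 \subseteq U \subseteq V$ for some $V \in \mathcal V$, which completes the proof.

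The main obstacle is arranging the coloring in $(1) \Rightarrow (2)$ so that it is well defined on copies rather than on embeddings, and so that the Ramsey conclusion translates precisely into the desired containment $F g_0 x_0 \subseteq V$. Rigidity of $\KK$ is used in an essential way to identify copies with embeddings; without it the natural coloring lives on $\mathrm{hom}(\AA_0, \FF)$, and RP for copies would be strictly weaker than needed. The precise choice of extension $\tilde g = g_0^{-1} f^{-1}$, as opposed to e.g.\ $g_0 f$, is exactly what aligns the group composition on the Ramsey side with the left action of $G$ on the dynamical side.
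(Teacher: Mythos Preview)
The paper does not contain a proof of this theorem. It appears in the preliminaries (Section~2.3) as a cited background result, attributed to Kechris, Pestov, and Todorcevic via the reference \cite{KPT}, and no argument is given beyond the statement. There is therefore nothing in the paper to compare your proposal against.

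That said, your sketch follows the standard route from the original KPT paper: for $(2)\Rightarrow(1)$ you use the $G$-flow of linear orders to extract rigidity and the compact flow $k^{\binom{\FF}{\AA}}$ to extract RP, and for $(1)\Rightarrow(2)$ you reduce the fixed-point problem on a compact $G$-flow to a finite combinatorial statement about covers, translate that into a coloring of $\binom{\FF}{\AA_0}$ via the pointwise stabilizers $G_{A_0}$, and invoke RP. The overall architecture is correct, and your remark that rigidity is what lets the coloring live on copies rather than embeddings is exactly the point. One place to be slightly more careful is the fixed-point criterion you invoke for $(1)\Rightarrow(2)$: as stated it quantifies over \emph{all} $x_0$, which is stronger than needed and makes the finite-intersection-property argument you sketch a bit awkward (the relevant closed sets depend on $F$ and $\mathcal V$ but not on $x_0$, and one must be sure they are nonempty and nested correctly). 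The cleaner formulation, as in \cite{KPT}, is that $G$ is extremely amenable iff for every compact $G$-flow $X$, every finite open cover $\mathcal V$, every $x_0$, and every finite $F\subseteq G$, the condition $Fg_0x_0\subseteq V$ can be met; one then passes to a minimal subflow and uses that the orbit of any point is dense to conclude. This is a matter of presentation rather than a real gap.
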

In [6], the above link between topological dynamics is used to reproduce the Ramsey property from a class by looking at an expansion.Let $\KK$ be a class of finite structures with signature $L$. Take $R_i$ to be a countable collection of relations indexed by some set $I$, with $R_i$ independent of $L$. We write $\vec{R}$ to mean the tuple $(R_i)_{i\in I}$. We define $L^*= L\cup \{R_i : i\in I\}$. With the notation defined, we have the material necessary to define a precompact expansion.   
\begin{definition}
	We let $K^* = \{(\AA,\vec{R}) :\AA \in \KK \}$. We define an expansion of $\AA\in \KK$ to be an element $\AA^* \in \KK^*$ which is isomorphic to $\AA$ when we remove its interpretations of the relation $\vec{R}$ (${\AA^*}\upharpoonright{L} = \AA$ ). We say $\KK^*$ is a precompact expansion of $\KK$ when $\forall \AA \in \KK$, $\AA$ only has finitely many expansions in $\KK^*$.
\end{definition}
The reason for the term precompact is fitting. Not only can it be viewed as any $\AA$ being covered by only finitely many expansions, but it can also be shown to relate to actual precompactness. Namely, if $\KK$ and $\KK^*$ are Fra\"iss\'e with limits $\FF$ and $\FF^*$ respectively, then the quotient $\Aut(\FF)/\Aut(\FF^*)$ is precompact.  
\begin{definition}
	Suppose $\FF$ is Fra\"iss\'e and $\FF^*$ is a precompact expansion of $\FF$. We say $\Age(\FF^*)$ has the expansion property (EP) relative to $\Age(\FF)$ if $\forall \AA \in \Age(\FF)$ $\exists \BB \in \Age(\FF)$ such that any expansion of $\AA$ embeds in to any expansion of $\BB$.     
\end{definition}
\begin{theorem}
	Let $\KK^*$ be an expansion of $\KK$ satisfying HP,JEP and EP relative to $\KK$. Then if the Ramsey degree of any $\AA \in \KK$ is bounded by the number of expansion in $\KK^*$, then $\KK^*$ has the Ramsey property.
\end{theorem}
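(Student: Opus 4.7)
Fix $\AA^*,\BB^* \in \KK^*$ expanding $\AA,\BB \in \KK$ together with a number of colours $k$; the goal is to find $\CC^* \in \KK^*$ with $\CC^* \to (\BB^*)^{\AA^*}_k$. Enumerate the (necessarily finitely many) expansions of $\AA$ in $\KK^*$ up to isomorphism as $\AA^*_1,\dots,\AA^*_n$ with $\AA^*_1 = \AA^*$. The plan is to lift a colouring of $\binom{\CC^*}{\AA^*}$ to an enlarged colouring of $\binom{\CC}{\AA}$ that also records the expansion type, apply the Ramsey degree hypothesis with an inflated palette, and then invoke EP twice --- once for $\AA$ and once for $\BB$ --- to extract the desired monochromatic copy of $\BB^*$.

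First, apply EP to both $\AA$ and $\BB$, producing $\BB_\AA,\BB_\BB \in \KK$ such that every expansion of $\AA$ embeds in every expansion of $\BB_\AA$, and analogously for $\BB$. By JEP, choose $\BB_1 \in \KK$ containing (copies of) both $\BB_\AA$ and $\BB_\BB$. Since $t_\KK(\AA) \leq n$, pick $\CC \in \KK$ with $\CC \to (\BB_1)^\AA_{kn,n}$, and select any expansion $\CC^* \in \KK^*$ of $\CC$.

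Given $\chi: \binom{\CC^*}{\AA^*} \to k$, define $\tilde\chi: \binom{\CC}{\AA} \to [n] \times [k]$ by setting $\tilde\chi(\AA') = (i,c)$, where $i$ is the unique index with $\CC^* \upharpoonright A' \cong \AA^*_i$, and $c$ equals $\chi(\CC^*\upharpoonright A')$ if $i=1$ and a fixed default value otherwise. The Ramsey instance produces a copy $\BB_2 \in \binom{\CC}{\BB_1}$ on which $\tilde\chi$ attains at most $n$ values; let $\BB_2^* := \CC^* \upharpoonright B_2$. Because $\BB_\AA$ embeds in $\BB_2$, EP applied to $\AA$ forces every $\AA^*_i$ to embed in $\BB_2^*$, so all $n$ values of the index coordinate are realized on $\binom{\BB_2}{\AA}$. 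Thus $\tilde\chi$ depends only on $i$ there; in particular, on copies with $i=1$ --- which are precisely the elements of $\binom{\BB_2^*}{\AA^*}$ --- the colour $c$ is constant, so $\chi$ is constant on $\binom{\BB_2^*}{\AA^*}$. Finally, EP applied to $\BB$ (with $\BB_\BB$ embedded in $\BB_2$) yields an embedding of $\BB^*$ into $\BB_2^*$, whose image $\BB_3^* \in \binom{\BB_2^*}{\BB^*} \subseteq \binom{\CC^*}{\BB^*}$ satisfies that $\chi\upharpoonright \binom{\BB_3^*}{\AA^*}$ is constant, delivering $\CC^* \to (\BB^*)^{\AA^*}_k$.

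The principal subtlety is the coordinated use of EP: applying it to $\AA$ drives the argument that the inflated colouring reduces to a function of the expansion type alone (thereby forcing $\chi$ to be constant on all $\AA^*$-copies inside $\BB_2^*$), while applying it to $\BB$ is what lets us locate a genuine $\BB^*$-copy rather than merely some other expansion of $\BB$ inside $\BB_2^*$. A minor technical point is the existence of at least one expansion $\CC^* \in \KK^*$ of $\CC$, which is immediate from the definition $\KK^* = \{(\AA,\vec{R}): \AA \in \KK\}$.
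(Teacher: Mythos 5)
Your proof is correct. Note that the paper itself states this theorem in the preliminaries without proof (it is quoted as a known result from Nguyen van Th\'e's work on precompact expansions), and your argument is essentially the standard one from that literature: inflate the colouring to record the expansion type, apply the Ramsey-degree hypothesis with the enlarged palette, and use the expansion property once for $\AA$ (to force all $n$ expansion types to appear in $\binom{\BB_2}{\AA}$, so that the $\leq n$ realized values are exhausted by the type coordinate and the $\chi$-value on type-$1$ copies is pinned down) and once for $\BB$ (to locate an actual copy of $\BB^*$ inside $\BB_2^*$). The only point worth flagging is that ``the number of expansions'' must be read as the number of expansions \emph{up to isomorphism} (and assumed finite) for your choice of $\CC$ with $\CC \rightarrow (\BB_1)^{\AA}_{kn,n}$ to be justified by the hypothesis $t_{\KK}(\AA) \leq n$; this is the standard reading and the one under which the theorem is true, so it is a clarification of the statement rather than a gap in your argument.
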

We will generalize this later in a categorical context. \\
\\
We will conclude this section with a list of Fra\"iss\'e classes we are interested in and known facts about their Ramsey degrees. 
\begin{itemize}
	\item The class $\Gra$ is Fra\"iss\'e with the Random graph as it's universal homogeneous structure. It has finite Ramsey degrees
	\item The class $\LOGra$ is Fra\"iss\'e and has RP. 
	\item The class of all finite linear orders $\mathbf{LO}$ is Fra\"iss\'e with universal homogeneous structure $\QQ$. It has RP and finite big Ramsey degrees with $T(\AA) = \tan^{ (2|\AA| -1)}(0)$. 
	\item The class of partitioned linear orders with n-components $\mathbf{LO}_n$ is Fra\"iss\'e with universal homogeneous structure $\QQ_n$. It has RP and finite big Ramsey degrees with $T(\AA) = \tan^{ (2|\AA| -1)}(0)$.
	\item The class of all finite tournaments $\mathbf{Tour}$ is Fra\"iss\'e with universal homogeneous structure $\SSSS(2)$. It has finite (both big and small) Ramsey degrees with $t(\AA) = 
	\frac{2 |\AA|}{|Aut(\AA)|} $ $T(\AA) =\frac{2 |\AA|}{|Aut(\AA)|} \tan^{ (2|\AA| -1)}(0)   $ 
\end{itemize}
\section{Functors in Relation to Topological Dynamics}
\subsection{Forgetful Functors and Expansions}
Forgetful functors are quite abstract, and so finding a definition that is universally accepted can be difficult. But generally speaking, a forgetful functor should describe away to take a structure, and map it to one that somehow has less structure, and consequently less morphisms. We stick with the definition outlined in \cite{BigRamsey}.  
\begin{definition}
	Suppose $U:\KK_1 \rightarrow \KK_2$ is a functor. We say it is forgetful if
	\begin{itemize}
		\item $U: \text{hom}_{\KK_1} (\AA, \BB) \rightarrow \text{hom}_{\KK_2} (U(\AA),U(\BB) )$ is injective. (C)  
	\end{itemize} 
\end{definition}
This leads to our notion of an expansion. Our definition serves to reduce the concept of a relational expansions to its core algebraic/combinatoric properties.
\begin{definition}
	Suppose $U:\KK_1 \rightarrow \KK_2$ is a forgetful functor. We say it is an expansion if
	\begin{itemize}
		\item $U$ is surjective on objects. (Proj) 
		\item If $f\in \text{hom}_{\KK_2}(\AA, \BB ) $ and $U(\BB^*) = \BB$, then there is a unique $\AA^*$ with $U(\AA^*) = \AA$ and $f^*\in \text{hom}_{\KK_1}(\AA^*,\BB^*) $  such that $U(f^*)\circ g = f$ for some $g\in \text{hom}(\AA,\AA) = \text{Aut}(\AA) $ (Ref) 
	\end{itemize} 
\end{definition}
The (Proj) condition ensures that $U$ is projective on objects i.e every object in $\KK_2$ has an expansion in $\KK_1$. The condition (C) mimics how objects become less rigid when relations are removed. The final condition (and arguably one of the most important) is the reflective condition (Ref). This guarantees that if I have an object $\BB$ in $\KK_2$ and an embedding from $\AA$ into $\BB$, then by adding structure on to $\BB^*$, we induce a new structure on to $\AA$. In particular, if $\AA^\prime \in \binom{\BB}{\AA}$ and $f \in \text{hom}(\AA,\BB)$ has image $\AA^\prime $ with $U(f^*) = f$ for some $f^* \in \text{hom}(\AA^*,\BB^*)$, then we denote the image $(\AA^*)^\prime \in \binom{\BB^*}{\AA^*} $ and say that ``$\AA^\prime $ is supported by $(\AA^*)^\prime $".     
\\
\\
If you consider a class $\KK$ of finite structures with signature $\mathcal{L}$ and consider a relational expansion $\KK^*$ with signature $\mathcal{L}^* = \mathcal{L}\cup \{\vec{R}^* \}$, then the map $U: \KK^* \rightarrow \KK$  $U((\AA,\vec{R}^*)) = \AA$ is an expansion. In fact, with this in mind, we can generalize the notion of a precompact expansion.   
\begin{definition}
	Suppose $\KK_1$ and $\KK_2$ are categories of finite structures. We say $\KK_1$ is a categorical precompact expansion of $\KK_2$ if it admits an expansion $U:\KK_1 \rightarrow \KK_2$ with the property that $\forall \AA \in \KK_2$ there is finitely many $\AA^i \in \KK_1$ with $U(\AA^i) = \AA$.
\end{definition}
When such a functor is defined, we will define $m(\AA)$ to be the number of expansions of $\AA$ in $\KK_1$. It should be clear that a precompact expansion is also a categorical precompact expansion. When speaking of a categorical precompact expansion, we will always refer to expansions of $\AA$ as $\AA^i$ where $i\in \{1,...,m(\AA)\}$ or as $\AA^*$. The former case will be when it is necessary to speak of multiple expansions, the latter in the instance when any expansion will serve our purpose. This is not to be confused with $\AA_i$ which will always refer to a sequence of objects in a category.
\subsection{Proof of Theorem 1.1}
 In order to connect categorical precompact expansions to topological dynamics, we need to find a natural way to connect our functor to infinite structures, not just finite ones. Doing this requires we utilize the works in \cite{Kubis}. 
 \begin{definition}
 	Given a class of structures $\KK$, we defined $\sigma \KK = \{ (\AA_\alpha)_{\alpha \in \NN} :  \AA_\alpha \in \KK ,\forall \alpha <\beta  \exists a_{\alpha}^\beta \in \text{hom}(\AA_\alpha,\AA_\beta), \forall \alpha <\beta <\gamma, a_{\beta}^\gamma \circ a_{\alpha}^\beta = a_\alpha^\gamma   \}$. That is, $\sigma \KK$ is the collection of all infinite sequences from $\KK$ that admit a gluing matrix $f_\alpha^\beta$.
 \end{definition} 
 \begin{prop}
 	$\sigma \KK$ is a category when endowed with homomorphisms $\text{hom} ( (\AA_\alpha), (\BB_\alpha) ) = \{ (f_\alpha)_{\alpha \in \NN} : \exists \beta_\alpha \text{strictly increasing sequence such that } f_\alpha \in \text{hom}( \AA_\alpha, \BB_{\beta_\alpha}) \} $
 \end{prop}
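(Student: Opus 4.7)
The plan is to verify the three axioms of a category directly: (i) composition lands back in the hom-sets as defined, (ii) composition is associative, and (iii) every object admits an identity morphism. The only subtle point is that a morphism carries with it a strictly increasing reindexing sequence $(\beta_\alpha)$, so composition must be defined by ``sliding along'' these indices.

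First I would define composition. Suppose $(f_\alpha) \in \text{hom}((\AA_\alpha),(\BB_\alpha))$ is witnessed by a strictly increasing $(\beta_\alpha)$ with $f_\alpha \in \text{hom}(\AA_\alpha, \BB_{\beta_\alpha})$, and $(g_\alpha) \in \text{hom}((\BB_\alpha),(\CC_\alpha))$ is witnessed by a strictly increasing $(\gamma_\alpha)$ with $g_\alpha \in \text{hom}(\BB_\alpha,\CC_{\gamma_\alpha})$. I set
\begin{equation*}
(g \circ f)_\alpha \;=\; g_{\beta_\alpha} \circ f_\alpha \;\in\; \text{hom}(\AA_\alpha, \CC_{\gamma_{\beta_\alpha}}).
\end{equation*}
The witness sequence for this composite is $\alpha \mapsto \gamma_{\beta_\alpha}$, which is strictly increasing since it is a composite of two strictly increasing maps $\NN\to\NN$. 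Hence $(g \circ f) \in \text{hom}((\AA_\alpha),(\CC_\alpha))$, so composition is well-defined.

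Next I would check associativity. Given a third morphism $(h_\alpha)$ witnessed by $(\delta_\alpha)$, both $((h\circ g)\circ f)_\alpha$ and $(h\circ(g\circ f))_\alpha$ unwind to $h_{\gamma_{\beta_\alpha}} \circ g_{\beta_\alpha} \circ f_\alpha$, using associativity of composition in the ambient category of structures. The reindexings also agree on the nose, namely $\alpha \mapsto \delta_{\gamma_{\beta_\alpha}}$. For the identity on $(\AA_\alpha)$, I take $1_{(\AA_\alpha)} := (1_{\AA_\alpha})_{\alpha\in\NN}$ witnessed by the identity reindexing $\beta_\alpha = \alpha$. A direct check shows $1 \circ f = f$ and $f \circ 1 = f$ for any compatible $(f_\alpha)$: in the first case $(1\circ f)_\alpha = 1_{\BB_{\beta_\alpha}}\circ f_\alpha = f_\alpha$, and in the second $(f\circ 1)_\alpha = f_\alpha \circ 1_{\AA_\alpha} = f_\alpha$, with identical witness sequences.

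The main obstacle, such as it is, is purely bookkeeping: one must keep the witness sequences $(\beta_\alpha), (\gamma_\alpha)$ separate from the structural maps $f_\alpha, g_\alpha$ when writing out the composite, and convince oneself that strict monotonicity is preserved under composition of $\NN \to \NN$ maps. No use is made of the gluing matrices $a_\alpha^\beta$ on the objects themselves; they are part of the datum of an object but play no role in verifying the categorical axioms for $\sigma\KK$.
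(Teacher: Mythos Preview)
Your verification is correct, and in fact the paper does not supply a proof of this proposition at all: it is stated and immediately used, with the construction implicitly deferred to Kubi\'s \cite{Kubis}. Your argument fills in exactly the routine details one would expect---defining the composite via $(g\circ f)_\alpha = g_{\beta_\alpha}\circ f_\alpha$ with witness $\alpha\mapsto \gamma_{\beta_\alpha}$, and checking associativity and identities on the nose---and your observation that the gluing matrices play no role in the bare categorical axioms is accurate for the hom-sets as the paper writes them (they only enter later, when the congruence $\sim$ is defined).
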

 Having $\sigma \KK$ will be quite useful for us. For one, $\KK$ can be viewed as a subcategory of $\sigma\KK$  by identifying $\AA \in \KK$ with the constant sequence $(\AA_\alpha)$ with $\AA_\alpha = \AA $ for all $\alpha \in \NN$. The existence of a gluing matrix allows us to construct a direct limit $\lim_{\rightarrow} \AA_\alpha $ for every sequence in $\sigma \KK$. For any structure $\vec{\AA} \in \sigma \KK$, we always define the gluing matrix with the lowercase letter (in this instance $a_\alpha^\beta$). Moreover, it is clear that $\Age(\lim_{\rightarrow} \AA_\alpha )\subseteq \KK$, and any structure who's age is a subset of $\KK$ can be constructed via a sequence in $\sigma \KK$. The problem with $\sigma \KK$ is that multiple sequences can have the same limit. We solve this by defining a congruence on $\sigma \KK$. Take $\vec{\AA}, \vec{\BB} \in \sigma \KK$ and $\vec{f},\vec{g} \in \text{hom} (\vec{\AA},\vec{\BB})$. Let $\phi, \psi: \NN \rightarrow \NN$ be the subsequences with respect to $\vec{f}$ and $\vec{g}$ respectively. We say $f\sim g$ if both
 \begin{align*}
 \forall \alpha \exists \beta \geq \alpha \text{ such that } \phi(\alpha) \leq \psi(\beta) \; b_{\phi(\alpha)}^{\psi(\beta)} \circ f_\alpha = g_\beta \circ a_{\alpha}^\beta \\
 \forall \alpha \exists \beta \geq \alpha \text{ such that } \psi(\alpha) \leq \phi(\beta) \; b_{\psi(\alpha)}^{\phi(\beta)} \circ g_\alpha = f_\beta \circ a_\alpha^\beta
 \end{align*}
 It is clear that $\sim$ is an equivalence relation. Moreover, $\sim$ naturally allows us to define a quotient category $\sigma \KK / \sim$, where every arrow is an equivalence class. It also equates structures as well. Namely, if $(A_{\alpha}) \in \sigma \KK$ and $(\AA_{\psi(\alpha)})$ a subsequence, then $(\AA_{\psi(\alpha)})$ and $(\AA_\alpha) $ are treated as the same object in $\sigma \KK / \sim$. From this point foreward, when we talk of $\sigma \KK$, we are going to mean $\sigma \KK/\sim $. Another thing to note about $\sigma \KK$ is that now $\KK$ can be viewed as a subcategory with no new arrows in $\sigma \KK$. Moreover, colimits are still preserved (as they are preserved under subsequences). So, the statement $\lim\limits_{\rightarrow} \vec{\AA} $ is well defined despite $\vec{\AA}$ being an equivalence class.\\
 \\
 \begin{definition}
 	We say $\vec{\FF} \in \sigma \KK$ is a Fra\"iss\'e sequence if $\lim\limits_{\rightarrow} \vec{\FF}$ is Fra\"iss\'e.
 \end{definition}
 \begin{definition}
 	Suppose $F: \KK \rightarrow \mathcal{J}$ is a functor. Then, $F$ extends to a functor $F:\sigma \KK \rightarrow \sigma \mathcal{J} $ by defining $F( (\AA_\alpha)) = (F(\AA_\alpha)) $. 
 \end{definition}
 \begin{lemma}
 	If $U: \KK^* \rightarrow \KK$ is a categorical precompact expansion, then $U:\sigma \KK^* \rightarrow \sigma \KK$ satisfies (Proj) and (C). 
 \end{lemma}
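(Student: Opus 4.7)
My plan is to treat the two axioms separately. Since $U$ is defined pointwise on sequences, condition (C) should transfer almost verbatim from the hypothesis that $U:\KK^*\to\KK$ is injective on hom-sets. The content lies in condition (Proj): lifting an arbitrary sequence $\vec{\AA}\in\sigma\KK$ to an object of $\sigma\KK^*$. I expect this to require a K\"onig's lemma style compactness argument, taking crucial advantage of the finiteness hypothesis built into ``categorical precompact expansion.''

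For (Proj), given $\vec{\AA}=(\AA_\alpha,a_\alpha^\beta)\in\sigma\KK$, the plan is to build $\vec{\AA^*}$ level by level. For each $n$, let $X_n$ denote the set of expansions of $\AA_n$; by hypothesis $X_n$ is finite, and by the original (Proj) it is nonempty. By (Ref) applied to the morphism $a_{n-1}^n$ and to any expansion $\AA_n^*\in X_n$, there is a uniquely determined expansion $E_n(\AA_n^*)\in X_{n-1}$ carrying a lift $\tilde a_{n-1}^n:E_n(\AA_n^*)\to\AA_n^*$ in $\KK^*$ with $U(\tilde a_{n-1}^n)\circ g_{n-1}=a_{n-1}^n$ for some $g_{n-1}\in\Aut(\AA_{n-1})$. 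This produces an inverse system $\cdots\to X_2\to X_1\to X_0$ of finite nonempty sets, whose inverse limit is nonempty by a standard stabilization argument. I would then pick a coherent sequence $(\AA_n^*)\in\varprojlim X_n$, define $\tilde a_n^m:=\tilde a_{m-1}^m\circ\cdots\circ\tilde a_n^{n+1}$, and set $\vec{\AA^*}=(\AA_n^*,\tilde a_n^m)\in\sigma\KK^*$. Finally I need to verify that $U(\vec{\AA^*})$ and $\vec{\AA}$ agree as objects in $\sigma\KK/{\sim}$: the underlying object sequences coincide exactly, and the discrepancy between $U(\tilde a_n^m)$ and $a_n^m$ amounts to automorphism twists on each $\AA_n$, which are absorbed precisely by the $\sim$-identifications.

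For (C), I would take $\vec{f^*},\vec{g^*}\in\text{hom}_{\sigma\KK^*}(\vec{\AA^*},\vec{\BB^*})$ with $U(\vec{f^*})\sim U(\vec{g^*})$. Unpacking $\sim$ gives, for cofinally many indices $\alpha$ and appropriate $\beta$, equations $U(b_{\phi(\alpha)}^{*\psi(\beta)})\circ U(f_\alpha^*)=U(g_\beta^*)\circ U(a_\alpha^{*\beta})$; by functoriality each side is the $U$-image of a single morphism in $\KK^*$, and the original (C) immediately pulls these back to $b_{\phi(\alpha)}^{*\psi(\beta)}\circ f_\alpha^*=g_\beta^*\circ a_\alpha^{*\beta}$, i.e.\ $\vec{f^*}\sim\vec{g^*}$. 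The hard part of the whole lemma is thus the K\"onig-type construction for (Proj), together with the bookkeeping needed to confirm that the automorphism twists appearing in (Ref) are washed out after one descends to $\sigma\KK/{\sim}$; once that is in hand, the rest of the argument is essentially formal.
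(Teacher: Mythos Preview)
Your proposal is correct and follows essentially the same approach as the paper. The paper phrases the (Proj) argument as building a finitely branching tree whose levels are the finite sets of expansions of $\AA_n$ (with predecessor given by (Ref)) and then invoking K\"onig's Lemma, while you phrase it as an inverse system of finite nonempty sets; these are the same argument, and both handle the automorphism twists from (Ref) by observing that the resulting sequence of $\iota_n$'s furnishes an isomorphism between $U((\AA_n^*))$ and $(\AA_n)$ in $\sigma\KK/\!\sim$. Your treatment of (C) is likewise identical to the paper's.
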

 \begin{proof}
 	First, we show that $U$ saitsfies (Proj). Take $(\AA_i) \in \sigma \KK $ with gluing matrix $a_n^m $. We may assume that $(\AA_i)$ is never constant as we only need to check the elements in $\sigma \KK \setminus \KK $. Construct a tree like so. Let $T_i =   \{\AA^* \in \KK^* : U(\AA^*) = \AA_i  \} $ and $T_0 = \{\emptyset \}$. Let $T=\bigcup\limits_{i=0}^\infty T_i$ For all $\AA^* \in T_1$, we say $\emptyset \leq_T \AA^*$. Suppose we have $\leq_T$ defined on $\bigcup\limits_{i=0}^n T_i$ and $(\bigcup\limits_{i=0}^n T_i, \leq_T)$ is a tree and if $i\geq 1$, $\BB^*\in T_{i+1}$ extends $\AA^* \in T_i$, then $\exists g\in \text{hom} (\AA^*,\BB^*)$ such that $U(g)\circ \iota = a_{i}^{i+1} $ for some $\iota \in \text{Aut}(\AA_i) $. For each $\BB^* \in T_{n+1}$, find a $\AA^* \in T_n$, $g \in \text{hom}(\AA^*, \BB^*)$ and $\iota \in \text{Aut}(\AA_n) $ such that $U(g) = a_{n}^{n+1}$. Set $\AA^* \leq_T \BB^*$. Since we have only defined one unique predecessor for each $\BB^*$, it is clear that $(\bigcup\limits_{i=0}^{n+1} T_i, \leq_T) $ is a tree with the desired condition. Following this construction, we get a tree $\mathbf{T}=(T,\leq_T)$ that is countable and is locally finite. In fact, each $T_n$ is finite. \\
 	\\
 	By K\"onig's Lemma, there is a path $\AA_i^* $ in $\mathbf{T}$. This path comes along with a sequence $g_n^{n+1} \in \text{hom}(\AA_n^*,\AA_{n+1}^*) $ and $\iota_n \in \text{Aut}(\AA_n) $ such that $U(g_n^{n+1})\circ \iota_n = a_n^{n+1}$ and the condition that $U(\AA_n^*) = \AA_n $. It is clear that $(\AA_n^*) \in \sigma \KK^*$ with matrix induced by $g_n^{n+1}$ and $ U ((\AA_n^*)) = (\AA_n)$ as the sequences $(\iota_n)$ is an isomorphism of $U((\AA_n^*)) $ with $(\AA_n) $. Thus, $U$ satisfies (Proj).\\
 	\\
 	It is clear that $ U$ satisfies (C). Suppose we have a maps $\vec{f}, \vec{g} \in \text{hom}_{\KK^*} ( (\AA_n),(\BB_n)) $ with $U(\vec{f}) \sim U(\vec{g}) $. So, there is subsequences $ \psi $ and $\phi$ for which $\forall \alpha \exists \beta \geq \alpha $ such that $\phi(\alpha) \leq \psi(\beta) $ or $ \phi(\alpha) \geq \psi(\beta)$ and one of the diagrams commutes
 	\\
 	\\
 	\begin{tikzcd}
 	U(\AA_{\alpha}) \arrow[d, "U(a_{\alpha}^\beta)"] \arrow[r, "U(f_\alpha)"] & U(\BB_{\phi(\alpha)}) \arrow[d, "U(b_{\phi(\alpha)}^{\psi(\beta)})"]  \\
 	U(\AA_\beta) \arrow[r, "U(g_\beta)"] & U(\BB_{\psi(\beta)}) 
 	\end{tikzcd}
 	\\
 	\\
 	\begin{tikzcd}
 	U(\AA_{\alpha}) \arrow[d, "U(a_{\alpha}^\beta)"] \arrow[r, "U(g_\alpha)"] & U(\BB_{\psi(\alpha)}) \arrow[d, "U(b_{\psi(\alpha)}^{\phi(\beta)})"]  \\
 	U(\AA_\beta) \arrow[r, "U(g_\beta)"] & U(\BB_{\phi(\beta)}) 
 	\end{tikzcd}
 	\\
 	\\
 	In either case, as $U$ satisfies (C) on $\KK^*$, it follows that \\
 	\\
 	\begin{tikzcd}
 	\AA_{\alpha}\arrow[d, "a_{\alpha}^\beta"] \arrow[r, "f_\alpha"] & \BB_{\phi(\alpha)}\arrow[d, "b_{\phi(\alpha)}^{\psi(\beta)}"]  \\
 	\AA_\beta \arrow[r, "g_\beta"] & \BB_{\psi(\beta)}
 	\end{tikzcd}
 	\\
 	\\
 	or
 	\\
 	\\
 	\begin{tikzcd}
 	\AA_{\alpha} \arrow[d, "a_{\alpha}^\beta"] \arrow[r, "g_\alpha"] & \BB_{\psi(\alpha)} \arrow[d, "b_{\psi(\alpha)}^{\phi(\beta)}"]  \\
 	\AA_\beta \arrow[r, "g_\beta"] & \BB_{\phi(\beta)} 
 	\end{tikzcd}
 	\\
 	\\
 	also commutes, so $\vec{f}\sim \vec{g} $.
 \end{proof}
 The property (Ref) may appear lost, but it can also be recovered.
 \begin{lemma}
 	If $U:\KK^* \rightarrow \KK $ is a categorical precompact expansion, then $U$ satisfies (Ref) for arrows in $\text{hom}(\AA,\BB)$ such that $\AA \in \KK $.
 \end{lemma}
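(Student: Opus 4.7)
The plan is to combine the (Ref) property that already holds for $U:\KK^*\to\KK$ with two successive pigeonhole passes, exploiting that each $\AA\in\KK$ has only $m(\AA)<\infty$ expansions and that $\Aut(\AA)$ is finite. Fix an arrow $f\in\text{hom}_{\sigma\KK}(\AA,\BB)$ with $\AA\in\KK$ (identified with its constant sequence) and $\BB^*=(\BB_n^*)\in\sigma\KK^*$ with $U(\BB^*)=\BB$. After choosing a representative, $f$ is given by arrows $f_\alpha\in\text{hom}_\KK(\AA,\BB_{\beta_\alpha})$ for some strictly increasing sequence $(\beta_\alpha)$.

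Since each $f_\alpha$ lies in $\KK$ and $U(\BB_{\beta_\alpha}^*)=\BB_{\beta_\alpha}$, the (Ref) axiom for $U:\KK^*\to\KK$ supplies, for each $\alpha$, a unique expansion $\AA_\alpha^*$ of $\AA$, an automorphism $g_\alpha\in\Aut(\AA)$, and an arrow $f_\alpha^*\in\text{hom}_{\KK^*}(\AA_\alpha^*,\BB_{\beta_\alpha}^*)$ with $U(f_\alpha^*)\circ g_\alpha=f_\alpha$. Because there are only $m(\AA)$ expansions of $\AA$ and $|\Aut(\AA)|<\infty$, pigeonholing twice yields an infinite set $S\subseteq\NN$, a single expansion $\AA^*$ and a single $g\in\Aut(\AA)$ such that $\AA_\alpha^*=\AA^*$ and $g_\alpha=g$ for all $\alpha\in S$. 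The family $(f_\alpha^*)_{\alpha\in S}$ is then a morphism from the constant sequence $\AA^*$ to $\BB^*$ in $\sigma\KK^*$, which we call $f^*$.

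Because dropping to a cofinal subsequence does not change the $\sim$-class of $f$, the original arrow is represented by $(f_\alpha)_{\alpha\in S}$. Viewing $g$ as an automorphism of the constant sequence $\AA$ in $\sigma\KK$, the identities $U(f_\alpha^*)\circ g=f_\alpha$ for every $\alpha\in S$ assemble into $U(f^*)\circ g=f$ in $\sigma\KK$, which is exactly the (Ref) equation in the extended category. Uniqueness of $\AA^*$ follows from the uniqueness clause of (Ref) in $\KK$: any other lift $\widetilde{f}^*\in\text{hom}_{\sigma\KK^*}(\widetilde{\AA}^*,\BB^*)$ with $U(\widetilde{f}^*)\circ\widetilde{g}=f$ restricts, on indices in $S$, to lifts of the individual $f_\alpha$ through $\widetilde{\AA}^*$, forcing $\widetilde{\AA}^*\cong\AA^*$.

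The main technical obstacle is coherence: a priori the $\AA_\alpha^*$ and the corrective automorphisms $g_\alpha$ could vary wildly with $\alpha$, so the naive componentwise lifts need not assemble into an arrow of $\sigma\KK^*$. The whole point of the categorical precompactness assumption, combined with the finiteness of $\Aut(\AA)$, is to let the two pigeonhole passes stabilize both choices on a single cofinal set; only after that stabilization does the congruence $\sim$ on $\sigma\KK$ permit us to discard the non-stabilized indices without changing the morphism we are trying to lift.
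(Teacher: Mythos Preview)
Your argument is correct, but it takes a considerably longer route than the paper's. The paper does not lift each $f_\alpha$ separately and then pigeonhole; instead, having chosen a coherent representative $f=(f_n)$ with $f_m=\alpha_n^m\circ f_n$ (where $\alpha_n^m$ is the gluing matrix of $\BB$) and letting $\beta_n^m$ be the gluing matrix of the given $\BB^*$, it applies (Ref) \emph{once} to $f_1$ to obtain $\AA^*$ and $f_1^*\in\text{hom}(\AA^*,\BB_1^*)$, and then simply sets $f_n^*=\beta_1^n\circ f_1^*$. Functoriality gives $U(f_n^*)=\alpha_1^n\circ U(f_1^*)$, which composed with the single corrective automorphism $g$ yields $f_n$. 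No pigeonhole step and no subsequence restriction is needed.

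The practical difference is that the paper's argument does not actually use precompactness or the finiteness of $\Aut(\AA)$ at all for this lemma: once a single lift at index $1$ is in hand, the gluing matrix of $\BB^*$ transports it everywhere automatically and coherently. Your closing paragraph says that ``the whole point of the categorical precompactness assumption'' is to enable the two pigeonhole passes; that is a misdiagnosis here. Precompactness is genuinely needed for (Proj) on $\sigma\KK$ (Lemma~3.1) and for the full (Ref) with $\AA\in\sigma\KK\setminus\KK$ (Theorem~3.3), where one builds a locally finite tree and invokes K\"onig's Lemma, but for this particular lemma it is superfluous. Your approach still works, and it has the minor virtue of making the uniqueness of $\AA^*$ explicit, but it obscures how little is really required.
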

 \begin{proof}
 	If $\BB \in \KK$ the answer is trivial. Else, $\BB$ can be identified as an equivalence class of sequences $(\BB_n)$ with $\BB_n \in \KK $ and gluing matrix $\alpha_n^m $. Moreover, $f\in \text{hom}(\AA,\BB)$ is a sequence of the form $f_n \in \text{hom}(\AA, \BB_n)$ such that $f_m= \alpha_n^m\circ f_n$. By (Proj) there is $\BB_n^* \in \KK^*$ and $\beta_n^m \in \text{hom}(\BB_n^*, \BB_m^*)$ such that $U(\BB_n^*) = \BB_n$ and $U(\beta_n^m) = \alpha_n^m $. So, by (Ref) on $\KK$, there is an expansion $\AA^*$ of $\AA$ and $f_1^*\in \text{hom}(\AA^*, \BB_1^*) $ such that $U(f_1*)= f_1$. By defining $f_n^* = \beta_1^n \circ f_1^*$, we get $U(f_n^*) = \alpha_1^n \circ f_1 = f_n$ as desired. 
 \end{proof}
 Using Lemma 2.1 and 2.2, we get the following. 
 \begin{theorem}
 	If $U:\KK^* \rightarrow \KK $ is a categorical precompact expansion, $U: \sigma \KK^* \rightarrow \sigma \KK$ is an expansion functor.
 \end{theorem}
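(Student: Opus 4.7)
By Lemma 2.1 the extended functor $U\colon \sigma\KK^*\to\sigma\KK$ already satisfies (Proj) and (C), and by Lemma 2.2 it satisfies (Ref) whenever the domain lies in the constant-sequence subcategory $\KK\subseteq \sigma\KK$. The remaining obstacle is thus to upgrade Lemma 2.2 to the case when both endpoints are genuine sequences, and my plan is to apply (Ref) on $\KK^*\to\KK$ pointwise and then glue the resulting expansions together by invoking the uniqueness-of-expansion clause.

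Concretely, I would fix $\vec{\AA}=(\AA_n)$ and $\vec{\BB}=(\BB_n)$ with gluing matrices $a_n^m$ and $b_n^m$, a morphism $\vec{f}=(f_n)$ represented with $f_n\in\text{hom}(\AA_n,\BB_{\phi(n)})$ satisfying $b_{\phi(n)}^{\phi(m)}\circ f_n=f_m\circ a_n^m$, together with an expansion $\vec{\BB}^*=(\BB_n^*)$ carrying gluing $b_n^{*,m}$ above $b_n^m$. Applying (Ref) to each $f_n$ with codomain-expansion $\BB_{\phi(n)}^*$ produces a unique expansion $\AA_n^*$ of $\AA_n$ and a lift $f_n^*\in\text{hom}(\AA_n^*,\BB_{\phi(n)}^*)$ with $U(f_n^*)\circ\iota_n=f_n$ for some $\iota_n\in\text{Aut}(\AA_n)$. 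A second application of (Ref) to $a_n^m\colon\AA_n\to\AA_m$ relative to $\AA_m^*$ furnishes an expansion $\widetilde{\AA}_n$ of $\AA_n$ and a morphism $a_n^{*,m}\in\text{hom}(\widetilde{\AA}_n,\AA_m^*)$ lifting $a_n^m$.

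The main obstacle is identifying $\widetilde{\AA}_n$ with $\AA_n^*$ so that the $a_n^{*,m}$ actually assemble into a gluing matrix on $(\AA_n^*)$. I would carry this out by observing that both $b_{\phi(n)}^{*,\phi(m)}\circ f_n^*$ and $f_m^*\circ a_n^{*,m}$ are morphisms into $\BB_{\phi(m)}^*$ whose $U$-images agree with $b_{\phi(n)}^{\phi(m)}\circ f_n=f_m\circ a_n^m$ up to precomposition by automorphisms of $\AA_n$; the uniqueness of the expansion of $\AA_n$ in (Ref), applied to the arrow $f_m\circ a_n^m$ with target $\BB_{\phi(m)}^*$, then forces $\widetilde{\AA}_n=\AA_n^*$. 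Iterating the same trick on chains $a_n^k=a_m^k\circ a_n^m$ yields the cocycle condition, at least up to the automorphism slack absorbed by $\sim$, so that $\vec{\AA}^*:=(\AA_n^*)$ is an object of $\sigma\KK^*$ and $\vec{f}^*:=(f_n^*)$ a morphism $\vec{\AA}^*\to\vec{\BB}^*$. The automorphisms $(\iota_n)$ then witness both $U(\vec{\AA}^*)\sim\vec{\AA}$ and $U(\vec{f}^*)\sim\vec{f}$, completing (Ref). The delicate part throughout is reconciling the ``up-to-automorphism'' flexibility in (Ref) with the equivalence relation $\sim$ defining $\sigma\KK^*$, which I expect to be the main bookkeeping challenge.
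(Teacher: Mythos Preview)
Your reduction via Lemmas 2.1 and 2.2 to the case $\vec{\AA},\vec{\BB}\in\sigma\KK\setminus\KK$ matches the paper exactly. For that remaining case, however, the paper does \emph{not} argue through the uniqueness clause of (Ref). Instead it reruns the K\"onig's Lemma construction from Lemma 2.1: it builds a finitely branching tree whose $i$-th level consists of those expansions $\AA_i^*$ of $\AA_i$ that admit a lift $f_i^*$ of $f_i$ into $\BB^*$, with the tree order recording the existence of a lift $\beta_i^{i+1}$ of the bonding map $\alpha_i^{i+1}$. A branch supplies simultaneously the sequence $(\AA_n^*)$, the lifts $(f_n^*)$, and the gluing $(\beta_n^m)$. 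The decisive step is then an appeal to condition (C): since
\[
U(\eta_n^m\circ f_n^*)=\gamma_n^m\circ f_n=f_m\circ\alpha_n^m=U(f_m^*\circ\beta_n^m),
\]
injectivity of $U$ forces $\eta_n^m\circ f_n^*=f_m^*\circ\beta_n^m$ on the nose, so $(f_n^*)$ really is a morphism in $\sigma\KK^*$.

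Your route via uniqueness has a gap precisely at this point. The uniqueness in (Ref) pins down the \emph{object} $\AA_n^*$, not the \emph{lift}; your argument that $\widetilde{\AA}_n=\AA_n^*$ is fine, but it gives no control over which $a_n^{*,m}$ or $f_n^*$ you obtain. Nothing in your sketch rules out $b_{\phi(n)}^{*,\phi(m)}\circ f_n^*\neq f_m^*\circ a_n^{*,m}$, and your appeal to $\sim$ cannot repair this: $\sim$ is an equivalence relation among morphisms between already well-formed objects of $\sigma\KK^*$, not a mechanism for relaxing the commuting-square requirement that makes $(f_n^*)$ a morphism in the first place. The paper's use of (C) is exactly what closes this gap, and it is the ingredient your proposal is missing. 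If you want to salvage the uniqueness-based approach, you would still need to invoke (C) at the end to force coherence of the lifts, at which point the K\"onig argument is arguably cleaner since it delivers all the data in one pass.
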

 \begin{proof}
 	By Lemma 2.1 and 2.2, it suffices to show that (Ref) holds for $\AA,\BB \in \sigma \KK \setminus \KK$.\\
 	\\
 	Take $f\in \text{hom}(\AA,\BB)$. Let $\alpha_n^m $ $\gamma_n^m $ be the gluing matrix of $(\AA_n) $ and $ (\BB_n)$ respectively. Note, $f$ can be identified as a sequence $f_n\in \text{hom}(\AA_n, \BB)$. Let $\BB^*$ be an expansion of $\BB$. Doing the same trick as in lemma 2.1, we can construct a tree $T$ with $T_0 = {\emptyset}$, $T_i = \{\AA^*\in \KK^* : U(\AA^*) = \AA_i \; \exists f_i^* \in \text{hom}(\AA_i^*,\BB_n^*) \; U(f_i^*)=f_i\} $ and $T= \bigcup\limits_{i=0}^\infty T_i$. Note, by the above lemma, no $T_i$ is empty. For each $\AA_{i+1}^* \in T_{i+1}$, $\exists \AA_i^* \in T_i $ such that $\exists \beta_{i}^{i+1} \in \text{hom}(\AA_i^*,\AA_{i+1}^*)$ $U(\beta_i^{i+1}) =\alpha_i^{i+1}$. Choose exactly one $\AA_i^*$ for each $\AA_{i+1}^*$ and define $\AA_i^* \leq_T \AA_{i+1}$. Following this recursive construction, we create a locally finite tree $T$. Find a branch $(\AA_n^*)$ and let $f_n^* \in \text{hom}(\AA_n^*, \BB^*)$ be such that $U(f_n^*) = f_n$. Then, $(f_n^*) \in \text{hom}(\AA^*,\BB^*)$. Suppose $\eta_n^m \in \text{hom}(\BB_n^*,\BB_m^*)$ such that $U(\eta_n^m) = \gamma_n^m$. Since
 	\begin{align*}
 	U(\eta_n^m \circ f_n^*) & = \gamma_n^m \circ f_n\\
 	&= f_m \circ \alpha_n^m\\
 	&= U(f_m^*) \circ U(\beta_n^m)\\
 	&=U(f_m^* \circ \beta_n^m)
 	\end{align*}
 	and $U$ is injective, $\eta_n^m \circ f_n^*=f_m^* \circ \beta_n^m $ and so $f^* =(f_n^*) \in \text{hom}(\AA^*, \BB^*)$.
 \end{proof}
  It is not enough that we maintained all the properties of an expansion. While this is an incredibly useful property, we need to relate automorphism groups of Fra\"iss\'e structures. This means that we need to ensure that $\text{hom}(\vec{\FF},\vec{\FF})$ corresponds to structural embeddings as we would want them to, else our construction may not provide useful information. On top of this, we would hope that our functor sends Fra\"iss\'e structures to other Fra\"iss\'e structures. Else, there would be no natural way to compare the automorphism group of one to another. Our next lemmas show that we are indeed in a best case scenario for comparing automorphism groups.  
 \begin{lemma}
 	Suppose $\FF$ is the Fra\"iss\'e limit of a class $\KK$ and $\vec{\FF} \in \sigma \KK$ is a sequence for $\FF$. Then, $hom(\FF,\FF) \cong hom(\vec{\FF}, \vec{\FF})$.
 \end{lemma}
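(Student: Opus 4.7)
The plan is to construct mutually inverse monoid maps
\[
\Phi\colon \mathrm{hom}(\vec\FF,\vec\FF) \to \mathrm{hom}(\FF,\FF),
\qquad
\Psi\colon \mathrm{hom}(\FF,\FF) \to \mathrm{hom}(\vec\FF,\vec\FF),
\]
using the universal property of the direct limit on one side and the finiteness of each $\FF_n$ on the other.

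First I would define $\Phi$. Writing $\vec\FF = (\FF_n)$ with gluing matrix $a_n^m$ and canonical cocone $\iota_n\colon\FF_n\to\FF = \varinjlim \FF_n$, a representative $\vec f = (f_n)$ with $f_n\in\mathrm{hom}(\FF_n,\FF_{\phi(n)})$ and $a_{\phi(n)}^{\phi(m)}\circ f_n = f_m\circ a_n^m$ gives, via $\iota_{\phi(n)}\circ f_n\colon \FF_n\to\FF$, a compatible cocone on $(\FF_n)$, hence a unique $\Phi(\vec f)\colon\FF\to\FF$. The commuting-square conditions in the definition of $\sim$ translate exactly into the statement that two equivalent representatives induce the same cocone (after passing to a common refinement of $\phi$ and $\psi$), so $\Phi$ descends to the quotient. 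Functoriality of $\varinjlim$ then makes $\Phi$ a monoid homomorphism.

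Next I would define $\Psi$. Given $f\colon\FF\to\FF$, note that $f\circ\iota_n\colon\FF_n\to\FF$ has finite image (since $\FF_n$ is finite), and since $\FF = \bigcup_m \iota_m(\FF_m)$ this image lies inside $\iota_{\phi(n)}(\FF_{\phi(n)})$ for some $\phi(n)$, which can be taken strictly increasing. Because $\iota_{\phi(n)}$ is an embedding, there is a unique $f_n\in\mathrm{hom}(\FF_n,\FF_{\phi(n)})$ with $\iota_{\phi(n)}\circ f_n = f\circ\iota_n$. Using injectivity of the $\iota_m$ one verifies the coherence relation $a_{\phi(n)}^{\phi(m)}\circ f_n = f_m\circ a_n^m$, so $\vec f=(f_n)\in\mathrm{hom}(\vec\FF,\vec\FF)$. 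A different choice of the $\phi(n)$ produces a sequence which, when post-composed through a common upper bound, satisfies both diagrams in the definition of $\sim$, so $\Psi(f)$ is well-defined as an equivalence class.

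Finally I would verify the inversion identities. The composition $\Phi\circ\Psi(f)$ is the unique map induced by the cocone $\iota_{\phi(n)}\circ f_n = f\circ\iota_n$, which is tautologically $f$. Conversely, given $\vec f$, applying $\Psi$ to $\Phi(\vec f)$ yields a sequence whose component at $n$ is the unique factorization of $\Phi(\vec f)\circ\iota_n = \iota_{\phi(n)}\circ f_n$ through some $\iota_{\phi'(n)}$; choosing $\phi'=\phi$ recovers $f_n$, and any other choice is $\sim$-equivalent by construction of $\Psi$. Monoid structure on both sides is preserved, giving the asserted isomorphism.

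The main obstacle I anticipate is purely bookkeeping rather than conceptual: one must show that two sequences produced by different subsequence choices really are $\sim$-equivalent, which amounts to chasing the two commuting squares in the definition of $\sim$ against the uniqueness clause of the universal property. Since the $\iota_n$ are monomorphisms and the $\FF_n$ are finite, the required $\beta\geq\alpha$ always exist by a cofinality argument on $\phi$ and $\psi$, and no genuine combinatorics is needed.
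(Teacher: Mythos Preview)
Your proposal is correct and follows essentially the same strategy as the paper: build a map $\mathrm{hom}(\FF,\FF)\to\mathrm{hom}(\vec\FF,\vec\FF)$ by restricting $f$ to the finite approximants and factoring through a sufficiently large stage, and build the inverse by taking the union (colimit) of the components $f_n$. The paper carries this out concretely by fixing the particular sequence $\vec\FF=(\FF\upharpoonright[n])$ with inclusion bonding maps (invoking Kubi\'s's result that all Fra\"iss\'e sequences are isomorphic to reduce to this case), whereas you phrase the same construction abstractly via the universal property of $\varinjlim$; your version is cleaner bookkeeping but the underlying argument is identical.
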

 \begin{proof}
 	Note, this is not immediately trivial as $\text{hom}(\FF,\FF)$ is a collection of model embeddings from $\FF$ in to itself, while $hom(\vec{\FF}, \vec{\FF}) $ is a collection of equivalence classes of arrows. Take $f \in \text{hom}(\FF,\FF) $. We will construct a sequence of functions $f_n: \AA_n \rightarrow \BB_n $ with $\AA_n, \BB_n \in \KK$ with gluing matrices $a_{n}^m\in \text{hom}(\AA_n, \AA_m) $ and $b_{n}^m\in \text{hom}(\BB_n, \BB_m) $ such that $(\AA_n)$ and $(\BB_n)$ limit to $\FF$. Moreover, $ \vec{f}$ limits to $f$ in a natural way.\\
 	\\
 	We start by assuming $\FF$ has universe $\NN$. We define $\FF\upharpoonright{[n]} = \AA_n$. It is clear that $f_n = f\upharpoonright{[n]}$ is a map $f_n: \AA_n \rightarrow \FF$. Since $n$ is finite, $M_n = \text{max}_{i\in [n]} \{ f(i) \} $ exists and $M_n$ is an increasing sequence. Setting $\BB_n = \FF_{M_n}$, we see that $f_n: \AA_n \rightarrow \BB_n$ is a model embedding. Moreover, $a_{n}^m $ and $b_n^m$ taken to be the inclusion maps grants that $\lim\limits_{\rightarrow}  \AA_n = \bigcup\limits_{n=1}^\infty \AA_n = \FF$ and similarly for $\BB_n$. Hence, $\vec{f} \in \text{hom} ( (\AA_n),(\BB_n)) $, but 
 	$\text{hom} ( (\AA_n),(\BB_n))= \text{hom}(\FF,\FF)$ in $\sigma \KK$ on the account of $\AA_n$ and $\BB_n$ having the same colimit (moreover, on the account that $\BB_n $ is a subsequence of $\AA_n$). This mapping from $f$ to $\vec{f}$ is unique up to $\sim$. Suppose $f \neq g$ where $g\in \text{hom} (\FF,\FF) $. If $\vec{f}\sim \vec{g} $, then there are strictly increasing sequences $\phi, \psi : \NN \rightarrow \NN$ such that $f\upharpoonright{\phi(n)} = g\upharpoonright{\psi(n)}$ for cofinally many $n$. But then clearly,  $f=g$. So, this mapping $f\rightarrow \vec{f}$ is an injection that shows $\text{hom}(\FF,\FF) \subseteq \text{hom}(\vec{\FF},\vec{\FF}) $. We also claim it is surjective.\\
 	\\
 	Take an equivalence class $\vec{f} \in \text{hom} (\vec{\FF}, \vec{\FF})$. We may assume that $\vec{\FF} = (\FF\upharpoonright{[n]})$ as (proven in \cite{Kubis}) all Fra\"iss\'e sequences are isomorphic. Then of course, any $\vec{f} \in \text{hom}(\vec{\FF}, \vec{\FF})$ is a sequence of partial functions on $\NN$ ordered by inclusion, and $\bigcup\limits_{n=1}^\infty f_n = f : \FF \rightarrow \FF$ and gets mapped to $\vec{f}$ up to equivalence. 
 \end{proof}
 \begin{lemma}
 	Suppose $\KK^*$ is a categorical precompact expansion of $\KK$. If $f\in \text{hom}(\mathcal{G}, \FF)$ where $\mathcal{G},\FF \in \sigma \KK$, then $U(f\upharpoonright{\AA} ) = U(f) \upharpoonright{U(\AA)} $
 \end{lemma}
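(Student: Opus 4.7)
The plan is to reduce the identity to functoriality of $U$ once the restriction operation is properly interpreted inside $\sigma\KK^*$. Write $\G$ and $\FF$ as sequences $(\G_n)$ and $(\FF_n)$ with gluing matrices $a_n^m$ and $b_n^m$, so that $f \in \text{hom}(\G, \FF)$ is represented by a sequence $(f_n)$ with $f_n : \G_n \to \FF_{\phi(n)}$ that is compatible with the gluings up to $\sim$. A finite embedded substructure $\AA$ of $\G$ factors through some $\G_n$ via a morphism $\iota : \AA \to \G_n$, because $\AA$ is finite and $\G$ is a directed colimit.

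First I would make precise the meaning of $f\upharpoonright \AA$: it is the composition $f_n \circ \iota : \AA \to \FF_{\phi(n)}$, viewed as an arrow $\AA \to \FF$ in $\sigma\KK^*$. This class is independent of the choice of $n$ and of the factorization $\iota$ through $\G_n$, by the compatibility of $(f_n)$ with the gluings and by the equivalence relation $\sim$ from Section 3.1. In parallel, $U(f)$ is represented by $(U(f_n))$ on the sequence $(U(\G_n))$ representing $U(\G)$, and $U(\AA)$ embeds into $U(\G)$ via $U(\iota) : U(\AA) \to U(\G_n)$, so $U(f)\upharpoonright U(\AA)$ is defined analogously.

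The computation then collapses to functoriality:
\[
U(f\upharpoonright \AA) \;=\; U(f_n \circ \iota) \;=\; U(f_n) \circ U(\iota) \;=\; U(f)\upharpoonright U(\AA).
\]
The main step is therefore a bookkeeping one: confirming that the choice of representative $(f_n)$ and the factorization of the embedding through some $\G_n$ do not affect either side. Since Theorem 2.1 already shows that $U$ on $\sigma\KK^*$ respects $\sim$, both restrictions are well-defined equivalence classes and the equality descends to $\sigma\KK$. I do not anticipate a serious obstacle; the lemma is essentially a coherence check that will later permit exchanging $U$ and restriction in arguments about automorphism groups and orbits of finite substructures.
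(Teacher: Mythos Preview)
Your proposal is correct and matches the paper's approach almost exactly: the paper simply chooses a representative $(f_n)$ of $f$ with $\text{dom}(f_1)=\AA$ (absorbing your $\iota$ into the choice of representative), so that $f\upharpoonright\AA=f_1$ and $U(f)\upharpoonright U(\AA)=U(f_1)$, which is precisely your functoriality computation specialized to that representative. Your version is a bit more explicit about well-definedness under $\sim$, but the underlying idea is identical.
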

 \begin{proof}
 	This is a quite trivial consequence of our construction. Take $(f_n)$ a representative of of $f$ with $\text{dom}(f_1) = \AA$ so that $f\upharpoonright{\AA} = f_1$. Then, $U(f)$ has $(U(f_n))$ as a representative and of course, $U(f) \upharpoonright{U(\AA)} = U(f_1) $ as desired.
 \end{proof}
This leads us to our final lemma. After showing this result, we will have all the material necessary to connect functors to automorphism groups and consequently prove Theorem [].
 \begin{lemma}
 	If $U:\KK^* \rightarrow \KK$ is a categorical precompact expansion of Fra\"iss\'e classes with Fra\"iss\'e objects $\FF^* $ and $\FF$, then $U(\FF^*) = \FF$.
 \end{lemma}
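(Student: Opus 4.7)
The plan is to represent $\FF^* = \lim_{\rightarrow}\vec{\FF}^*$ via a Fra\"iss\'e sequence $\vec{\FF}^* = (\FF^*_n) \in \sigma\KK^*$ with bonding arrows $a_n^m$. By the preceding theorem the extended functor $U:\sigma\KK^* \to \sigma\KK$ is itself an expansion, so $U(\FF^*) = \lim_{\rightarrow}(U(\FF^*_n))$ is a well-defined object of $\sigma\KK$. The aim is to verify that $U(\vec{\FF}^*)$ is a Fra\"iss\'e sequence in $\sigma\KK$; uniqueness of Fra\"iss\'e limits will then identify $U(\FF^*)$ with $\FF$.

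The easier ingredient is the age calculation. The inclusion $\Age(U(\FF^*)) \subseteq \KK$ is immediate from the codomain of $U$. For the reverse, given $\AA \in \KK$, (Proj) supplies an expansion $\AA^* \in \KK^*$, universality of $\FF^*$ in $\KK^*$ gives an embedding $\AA^* \hookrightarrow \FF^*_n$ for some $n$, and applying $U$ produces $\AA \hookrightarrow U(\FF^*_n) \hookrightarrow U(\FF^*)$. The substantive step is the absorbing/extension property: for every $f \in \text{hom}_\KK(U(\FF^*_n), \BB)$, I need to produce $m \geq n$ and $g \in \text{hom}_\KK(\BB, U(\FF^*_m))$ with $g \circ f = U(a_n^m)$. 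The strategy is to lift $f$ to some $f^* \in \text{hom}_{\KK^*}(\FF^*_n, \BB^*)$ for a well-chosen expansion $\BB^*$ of $\BB$ with $U(f^*) = f$, apply the absorbing property of $\vec{\FF}^*$ inside $\sigma\KK^*$ to obtain $g^* \in \text{hom}_{\KK^*}(\BB^*, \FF^*_m)$ with $g^* \circ f^* = a_n^m$, and set $g = U(g^*)$; functoriality of $U$ then yields $g \circ f = U(a_n^m)$.

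The main obstacle is producing the lift $f^*$ with source exactly $\FF^*_n$. Condition (Ref) applied against an arbitrary expansion $\BB^0 \in \KK^*$ of $\BB$ (supplied by (Proj)) yields only a lift $f^0:\widetilde{\AA^*}\to\BB^0$ whose source $\widetilde{\AA^*}$ is uniquely determined by $(f,\BB^0)$ and need not coincide with $\FF^*_n$, and where $U(f^0)$ matches $f$ only up to an automorphism of $U(\FF^*_n)$. To overcome this I plan to exploit precompactness (only finitely many expansions of $\BB$) together with the amalgamation and joint embedding properties of the Fra\"iss\'e class $\KK^*$: jointly embed $\FF^*_n$ and $\widetilde{\AA^*}$ into a common object of $\KK^*$, absorb the resulting amalgam into some $\FF^*_m$ using cofinality of $\vec{\FF}^*$, and then use the uniqueness clause of (Ref) to extract an expansion $\BB^*$ of $\BB$ whose associated source-expansion is exactly $\FF^*_n$. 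Any residual discrepancy from an automorphism $\tau$ of $U(\FF^*_n)$ is absorbed by lifting $\tau$ to an automorphism $\tau^*$ of $\FF^*_n$; this is possible because (Ref) applied to both $\tau$ and $\tau^{-1}$, combined with the injectivity clause (C) of $U$ on hom-sets, forces the lift to be invertible in $\KK^*$. Once the extension property is in hand, uniqueness of Fra\"iss\'e limits identifies $U(\FF^*)$ with $\FF$.
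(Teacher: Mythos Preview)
Your route through the Kubi\'s absorption property is a legitimate alternative strategy, but the execution has a real gap at the lifting step. Condition (Ref) fixes the \emph{target} expansion and uniquely determines the \emph{source} expansion; it does not let you prescribe the source. You need, for a given $f:U(\FF^*_n)\to\BB$, an expansion $\BB^*$ of $\BB$ whose (Ref)-induced source is exactly $\FF^*_n$, and your plan to ``jointly embed $\FF^*_n$ and $\widetilde{\AA^*}$ into a common object and absorb into $\FF^*_m$'' does not produce such a $\BB^*$: that joint embedding lives over $\AA=U(\FF^*_n)$, not over $\BB$, and no step in the outline manufactures an expansion of $\BB$ with the required (Ref)-source. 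Precompactness tells you there are finitely many candidate $\BB^j$, but nothing forces the map $j\mapsto(\text{source for }(f,\BB^j))$ to hit $\FF^*_n$. Your automorphism-lifting claim is also too strong: (Ref) applied to $\tau\in\Aut(U(\FF^*_n))$ with target $\FF^*_n$ gives $f^*\in\Aut(\FF^*_n)$ with $U(f^*)\circ g=\tau$ for \emph{some} $g$, hence only $U(f^*)=\tau g^{-1}$, not $U(f^*)=\tau$. When $\KK^*$ consists of rigid structures (say linear orders over sets) the only lift is the identity, so a nontrivial $\tau$ cannot lift; the map $U:\Aut(\FF^*_n)\to\Aut(U(\FF^*_n))$ is injective by (C) but need not be surjective.

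The paper avoids all of this by working at the level of the limit rather than the sequence. Given $f\in\text{hom}(\AA,U(\FF^*))$ with $\AA\in\KK$, it applies the extended (Ref) (Lemma~3.2) with target $\FF^*$ to get \emph{some} lift $f^*:\AA^*\to\FF^*$, without caring which expansion $\AA^*$ arises. Ultrahomogeneity of $\FF^*$ then extends $f^*$ to an automorphism $\hat f^*$, and functoriality plus the restriction lemma push $U(\hat f^*)$ down to an automorphism of $U(\FF^*)$ extending $f$. The point is that working with the full limit lets ultrahomogeneity of $\FF^*$ swallow whatever source expansion (Ref) hands you; at the sequence level you are forced to match a prescribed source $\FF^*_n$, and the axioms do not give you that control. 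If you want to salvage the sequence approach, you would need an additional ``co-(Ref)'' hypothesis (push-forward of expansions along embeddings), which holds for relational expansions but is not part of the categorical definition.
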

 \begin{proof}
 	It suffices to show any $f\in \text{hom}(\AA, U(\FF^*)) $ with $\AA \in \KK$ can extend to an automorphism. Take a reflection $f^* \in \text{hom}(\AA^*, \FF^*) $. As $\FF^*$ is Fra\"iss\'e, $f^*$ extends to an automorphism $\hat{f}^*$. But then as functors send invertible elements to invertible elements, $ U(\hat{f}^*)$ is an automorphism of $U(\FF^*)$ and by lemma 2.5 $U(\hat{f}^*)\upharpoonright{\AA} = U(\hat{f}^* \upharpoonright{\AA^*}) = f$ as desired.
 \end{proof}
 \begin{theorem}
 	Suppose $\KK^*$ is a categorical precompact expansion of $\KK$ witnessed by functor $U$. Suppose $\KK$ and $\KK^*$ are Fra\"iss\'e with limits $\FF$ and $\FF^*$ respectively. Then, there is a injective continuous group homomorphism from $G^*=\Aut(\FF^*) $ to $G=\Aut(\FF)$. 
 \end{theorem}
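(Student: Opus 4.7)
The plan is to define $\Phi \colon G^* \to G$ directly from the functor $U$ and then verify in turn that it is a group homomorphism, injective, and continuous. Using Lemma 2.6 to identify $U(\FF^*)$ with $\FF$, and using Lemma 2.4 to identify $\Aut(\FF^*)$ with $\text{hom}(\vec{\FF}^*,\vec{\FF}^*)$ in $\sigma\KK^*$ (and likewise for $\FF$), set $\Phi(g^*) := U(g^*)$. Since functors preserve isomorphisms this lands in $\Aut(U(\FF^*)) = \Aut(\FF)$, and functoriality immediately gives that $\Phi$ is a group homomorphism. Injectivity follows from condition (C): by Lemma 2.1 the extension $U \colon \sigma\KK^* \to \sigma\KK$ is injective on hom-sets, so $U(\vec{g}^*_1) = U(\vec{g}^*_2)$ forces $\vec{g}^*_1 = \vec{g}^*_2$, and hence $g^*_1 = g^*_2$ after applying Lemma 2.4 in reverse.

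The substantive part is continuity, which should reduce to a careful application of the reflection property (Ref) established for the extended $U$ in the preceding theorem. Since $\Phi$ is a group homomorphism it is enough to check continuity at the identity. A basic open neighborhood of $1 \in G$ has the form $V_\AA = \{h \in G : h \upharpoonright \AA = \mathrm{id}_\AA\}$ for some finite substructure $\AA \hookrightarrow \FF$. Apply (Ref) to the inclusion $\iota \colon \AA \hookrightarrow U(\FF^*) = \FF$ paired with the expansion $\FF^*$: this produces an expansion $\AA^* \in \KK^*$ and an embedding $\iota^* \colon \AA^* \to \FF^*$ satisfying $U(\iota^*) \circ g = \iota$ for some $g \in \Aut(\AA)$. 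Write $\BB^*$ for the image of $\iota^*$ inside $\FF^*$, and set $W := \{h^* \in G^* : h^* \upharpoonright \BB^* = \mathrm{id}_{\BB^*}\}$, an open neighborhood of $1 \in G^*$. For any $h^* \in W$, Lemma 2.5 gives
\[
U(h^*)\upharpoonright U(\BB^*) \;=\; U\bigl(h^* \upharpoonright \BB^*\bigr) \;=\; U(\mathrm{id}_{\BB^*}) \;=\; \mathrm{id}_{U(\BB^*)},
\]
while the identity $U(\iota^*) \circ g = \iota$ forces the underlying set of $U(\BB^*)$ to equal $g^{-1}(A) = A$. Thus $\Phi(h^*) \in V_\AA$ and $\Phi(W) \subseteq V_\AA$, which is precisely continuity at the identity.

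The main technical obstacle I anticipate is bookkeeping the $\Aut(\AA)$-ambiguity in (Ref), which a priori only identifies $U(\BB^*)$ with $\AA$ up to precomposition by some $g \in \Aut(\AA)$ rather than on the nose. The cleanest way to absorb this is to observe that as $\AA$ ranges over finite substructures of $\FF$, the sets $V_\AA$ form a neighborhood basis of $1$ that is closed under the induced action of $\Aut(\AA)$ on $A$, so the $g$ arising in (Ref) simply shuffles one basic open neighborhood to another of the same form and leaves the containment $\Phi(W) \subseteq V_\AA$ (or a neighbor of it in the same basis) intact.
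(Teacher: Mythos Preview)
Your argument for the homomorphism and injectivity parts matches the paper's almost verbatim: define $\Phi = U$, use functoriality to send invertibles to invertibles and to respect composition, and invoke condition (C) (via Lemma 2.1) on $\sigma\KK^*$ for injectivity.

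For continuity you take a genuinely different route. The paper argues sequentially: if $f^i \to f$ in $G^*$, then for each $N$ the finite approximations $f^i_n \colon \FF^*_n \to \FF^*$ eventually stabilise to $f_n$, hence $U(f^i_n)$ stabilise to $U(f_n)$, and so $U(f^i) \to U(f)$ pointwise. This uses only the componentwise action of $U$ on sequences and never invokes (Ref). Your neighborhood-basis approach via (Ref) and Lemma 2.5 is also correct, but it does strictly more work; in particular the $\Aut(\AA)$-twist you flag as an obstacle is harmless exactly for the reason you give (since $g$ permutes $A$ setwise, the underlying set of the image of $U(\iota^*)$ is still $A$), so the final paragraph about ``shuffling basic neighborhoods'' is not needed. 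Both approaches rely on the same implicit identification of $U(\BB^*)$ with a concrete finite substructure of $\FF$, so neither is more rigorous than the other on that point.

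One thing to be aware of: the paper's proof continues past what the theorem literally asserts and shows that the image $U(G^*)$ is \emph{closed} in $G$. That closedness is not part of the theorem statement you were given, but it is exactly what is used immediately afterward in the proof of Theorem~1.1 (to conclude that $G/U(G^*)$ is a precompact metrizable $G$-space). Your proposal does not address this, so if you intend to carry the argument forward to Theorem~1.1 you will need to supply it separately.
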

 \begin{proof}
 	To show this, we need a candidate for a map between the two automorphism groups. The natural thing to do is use $U$. By lemma 2.1, $U:\text{hom}(\FF^*, \FF^*) \rightarrow \text{hom}(\FF,\FF) $ is injective. Note that $\Aut(\FF^*) \subseteq \text{hom} (\FF^*,\FF^*)$. I claim that if $f$ is invertible, then $U(f)$ is invertible. As $f$ is invertible, there is an $f^{-1}$ for which $f\circ f^{-1} = \text{id}_{\FF^*}$. So, $U(f\circ f^{-1}) = U(f)\circ U(f^{-1}) = \text{id}_{\FF}$ and so $ U(f^{-1}) = U(f)^{-1}$ as one might expect. Therefore, $U:\Aut(\FF^*) \rightarrow \Aut(\FF)$ is a group embedding. We now show it is continuous. We do this by utilizing Lemma 2.2. Any $f\in \Aut(\FF^*)$ and be identified with a sequence of functions $f_n : \FF_n^* \rightarrow \FF^* $. So, a sequence of automorphisms $f^i \in \Aut(\FF^*) $ converging to $f$ would mean that for any $N$, there is an $M$ such that for all $i \geq M$ and $\forall n\leq N $ the approximations $f_n^i: \FF_n^* \rightarrow \FF^*$ remain constant. But of course, this means that $U(f_n^i)$ remains constant and agrees with $U(f_n)$, so $U(f^i) \rightarrow U(f)$ pointwise.\\
 	\\
 	Now we show the image of $G^*$ under this mapping is closed. Let $f_n \in \{ U(g) : g\in G^* \}$ be convergent with limit $f$. Suppose $\FF $ has universe $\NN$ and let $\FF_n$ be the model induced by restricting $\FF$ to $\{1,...,n\} \subseteq \NN$. As $f_n$ is convergent, we can assume that $f_n \upharpoonright{\FF_{n-1}} = f_{n-1} \upharpoonright{\FF_{n-1}} $ by passing to a subsequence. As our expansion is precompact, by the pigeon hole principle, there is a subsequence $f_{n,1}$ of $f_n$ such that $f_{n,1}\upharpoonright{\FF_1} $ admits a sequence of reflections $g_{n,1}^* \in \text{hom} (\FF_1^*, \FF^*)$  with $U(g_{n,1}^*) = f_{n,1} \upharpoonright{\FF_1}$ for a fixed expansion $\FF_1^* $. Note, we need not worry about a corrective automorphism of $\FF_1$ as we assumed that $f_n$ was in the image of $U$. Recursively construct $f_{n,k+1}$ and $g_{n,k+1}$ such that $f_{n,k+1} $ is a subsequence of $f_{n,k}$, $g_{n,k+1} \in \text{hom} (\FF_{k+1}^* , \FF^*)$ with $U(g_{n,k+1}) = f_{n,k+1} \upharpoonright{\FF_{k+1}} $. As $\FF^*$ is Fra\"iss\'e, we can construct a sequence $g_n^*\in G^*$ by extending the map $g_{n,n}$ i.e $ g_n^* \upharpoonright{\FF_n^*} = g_{n,n}$. But of course, by the way we constructed $g_{n,n}$, $g_n^*\upharpoonright{ \FF_{n-1}^*} = g_{n-1}^* \upharpoonright{\FF_{n-1}^*} $. So, $g_n^*$ is a Cauchy sequence in $G^* $ which is complete. Consequenty, there is a limit $g^* \in G$. But then by our previous lemma we have
 	\begin{align*}
 	U(g^*) \upharpoonright{\FF_n}  &= U(g^* \upharpoonright{\FF_n^*} )\\
 	&= U(g_{n,n})\\
 	&= f_{n,n}\upharpoonright{\FF_n} 
 	\end{align*}
 	In particular, $f_n$ admits a sequence that converges to $U(g^*)$ so $f_n$ must converge to $U(g^*)$. Consequently, $f_n$ converges to something in the image of $G^* $ under the map $U$ and hence the image is closed. 
 \end{proof}
 \textbf{Proof of Theorem 1.1:} Let $u(G^*) = \{ U(g) : g\in G^* \}$. From the above we have that $ u(G^*)$ is closed, but what can we say about the quotient space $G/u(G^*)$? Basic open sets of $G$ are of the form
 \begin{align*}
 V_{g,\AA_n} &= \{ f \in G :  f\upharpoonright{\AA_n} = g\upharpoonright{\AA_n} \} 
 \end{align*}
 where $\AA_n$ is the finite model gotten by restricting $\FF$ to $[n]$. For a given n, let us define sets for $i\leq m(\AA_n) $
 \begin{align*}
 S_i &= \{ f\in G : \exists h \in \text{hom}(\AA_n^{i} , \FF^*) \exists \iota \in \text{Aut}(\AA_n) \; U(h)\circ \iota  = f \upharpoonright{\AA_n} \}
 \end{align*}
 These sets are nonempty by the properties of $U$.  Moreover, realizing precompactness is a consequence of the following observations.\\
 \\
 \textbf{Observation 1:} if $f \in S_i $ with $U(h) \circ \iota =f \upharpoonright \AA_n $, then by extending $\iota $ to a full automorphism $\hat{\iota} \in G $, we get $(U(h)\circ \hat{\iota}) \circ f^{-1} \upharpoonright \AA_n = \text{id}_{\AA_n} $. In particular, $(U(h)\circ \hat{\iota}) \circ f^{-1}\in V_{\text{id}_{\AA_n} , \AA_n} $.\\
 \\
 \textbf{Observation 2:} Taking $h_i\in \text{hom}(\AA_n^i,\FF^*)$ arbitrary and extending $U(h_i)$ to a full automorphism $f_i\in G $, then $f_i\in S_i $ and $f_i\upharpoonright\AA_n = U(h) $.
 \\
 \\
 \textbf{Observation 3:} Given any  $g \in S_i$ with 
 \begin{align*}
 U(h_1)\circ \iota &= g\upharpoonright \AA_n
 \end{align*}
 then there is a $T^*\in G^*$ such that $T^*\circ h = h_1 $. Consequently,
 \begin{align*}
 U(T^* \circ h) \circ \iota &= g\upharpoonright \AA_n\\
 U(T^*) \circ f \circ \iota &= g\upharpoonright \AA_n
 \end{align*} 
 Let $F$ be a set of selections according to Observation 2, that is,  for each $i$, $F$ contains exactly one $f_i \in S_i $ that satisfies the condition of Observation 2. Then $u(G^*) F V_{\text{id}, \AA_n} = G$. To see this, take any $g\in G$. It must be the case that $g\in S_i$ for some fixed $i$. By Observation 3, there is a $T^*\in G^*$ such that $U(T^*) \circ f_i \circ \iota = g\upharpoonright \AA_n $. By Observation 1, there is a $\hat{\iota}\in V_{\text{id}_{\AA_n} ,\AA_n}$ such that $U(T^*) \circ f_i \circ \hat{\iota} = g$. 
 \\
 \\
 Hence $G/u(G^*)$ is a precompact metrizable $G$-space. Consequently, $M(G)$ is metrizable.
\section{Bounding Ramsey Degrees}
We have now shown that if $\KK^*$ is a categorical precompact expansion of $\KK$ with finite Ramsey degrees, then $\KK$ also has finite Ramsey degrees. Now our goal is to develop techniques to best compute Ramsey degrees given this knowledge. Moreover, under very strict conditions, we can also compute big Ramsey degrees. This will be of use to us in the final section, where we apply all of our developed machinery. 
\subsection{Ramsey Degrees}
We start with bounding Ramsey degrees from above. This will likely be the most practical use of categorical precompact expansions if ever applied to other categories. 
\begin{theorem}
	Suppose $\KK_1$ is a categorical precompact expansion of $\KK_2$. If for $\AA\in \KK_2 $, $\text{max}_{i<m(\AA)} t_{\KK_1} (\AA^i) < \infty, $ then $t_{\KK_2} (\AA) \leq  \sum\limits_{i< m(\AA)} t_{\KK_1}( \AA^i)$.
\end{theorem}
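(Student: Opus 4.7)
Set $m = m(\AA)$, $t_i = t_{\KK_1}(\AA^i)$, and $T = \sum_{i=1}^{m} t_i$. Fix $\BB \in \KK_2$ and $k \in \NN$; the goal is to produce $\CC \in \KK_2$ witnessing $\CC \to (\BB)^\AA_{k, T}$. The plan is a tower construction inside $\KK_1$ followed by a single application of the forgetful functor $U$, with (Ref) used to partition the copies of $\AA$ in the resulting structure by their expansion type.

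First I would use (Proj) to pick any expansion $\BB^* \in \KK_1$ of $\BB$, and build a tower $\BB^* = \CC_0^*, \CC_1^*, \ldots, \CC_m^*$ in $\KK_1$: given $\CC_{i-1}^*$, the hypothesis $t_i < \infty$ supplies a $\CC_i^* \in \KK_1$ with $\CC_i^* \to (\CC_{i-1}^*)^{\AA^i}_{k, t_i}$. Set $\CC^* := \CC_m^*$ and $\CC := U(\CC^*)$. Given any $\chi: \binom{\CC}{\AA} \to k$, define $\chi_i : \binom{\CC^*}{\AA^i} \to k$ by $\chi_i((\AA^i)') := \chi(U((\AA^i)'))$. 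Peel off monochromatic subcopies from the top of the tower: inside $\CC_m^*$ find $(\CC_{m-1}^*)'$ on which $\chi_m$ takes at most $t_m$ colors; inside that, find $(\CC_{m-2}^*)'$ on which $\chi_{m-1}$ takes at most $t_{m-1}$ colors; continue down to some $(\BB^*)' \in \binom{\CC^*}{\BB^*}$ on which each $\chi_i$ takes at most $t_i$ colors simultaneously, since monochromaticity is inherited by further subcopies.

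Finally set $\BB' := U((\BB^*)') \in \binom{\CC}{\BB}$. By (Ref) applied to $(\BB^*)'$, every embedding $f: \AA \to \BB'$ lifts uniquely to some $f^* : \AA^i \to (\BB^*)'$ up to a correcting element of $\Aut(\AA)$, so each copy $\AA' \in \binom{\BB'}{\AA}$ arises as $U((\AA^i)')$ for a canonical $(\AA^i)' \in \binom{(\BB^*)'}{\AA^i}$ with $\chi(\AA') = \chi_i((\AA^i)')$. Hence $\binom{\BB'}{\AA}$ is covered by $\bigcup_{i=1}^m U\!\left(\binom{(\BB^*)'}{\AA^i}\right)$, which forces $|\chi(\binom{\BB'}{\AA})| \leq \sum_{i=1}^{m} t_i = T$.

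The delicate point is precisely this last bookkeeping: one must verify that every copy of $\AA$ inside $\BB'$ is supported by at least one expansion, and that the correcting automorphism in (Ref) does not alter the induced color, so that the push-forward $\chi \mapsto \chi_i$ is well defined on copies rather than merely on embeddings. The precompactness of the expansion, i.e.\ the finiteness of $m(\AA)$, is exactly what allows the tower to terminate after finitely many steps and yields the summation bound rather than something larger.
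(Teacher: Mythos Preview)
Your proposal is correct and follows essentially the same route as the paper: build a Ramsey tower $\BB^*=\CC_0^*,\CC_1^*,\ldots,\CC_m^*$ in $\KK_1$, set $\CC=U(\CC_m^*)$, lift the colouring via $U$, peel subcopies from the top down, and invoke (Ref) at the end to see that every copy of $\AA$ inside $\BB'$ is supported by some $\AA^i$-copy whose colour you have already controlled. The only cosmetic differences are that the paper phrases the argument for $m(\AA)=2$ and asserts the general case by iteration, and that it works with structural colourings of $\text{hom}(\AA,\CC)$ (i.e.\ colourings constant on $\Aut(\AA)$-orbits) rather than directly with $\binom{\CC}{\AA}$; your remark that the correcting automorphism in (Ref) does not change the colour is exactly the point the paper uses when it passes from $\chi(U(f\circ g)\circ U(h^*)\circ\iota)$ to $\chi(U(f\circ g)\circ U(h^*))$.
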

\begin{proof}
	Suppose we have $\AA \in \KK_2$ is as above and take $\BB \in \KK_2$. It suffices to work with the case $m(\AA) =2$ as the argument used can be recursively applied (like in [2]). Let $t=\sum\limits_{i=0}^1 t_{\KK_2}( \AA^i)  $. We start by constructing a $\CC$ for which $\CC\rightarrow (\BB)_{k,t }^{\AA} $. Fix a chain of $\CC_i \in \KK_1 $, $i \leq 2 $ such that
	\begin{align*}
	U(\CC_0) &=  \BB \\
	\CC_{1} &\rightarrow (\CC_0)_{k, t_{\KK_1}(\AA^{0} )}^{ \AA^0}\\
	\CC_{2} &\rightarrow (\CC_1)_{k, t_{\KK_1}(\AA^{1} )}^{ \AA^1}
	\end{align*}
	I claim that $\CC = U(\CC_2)$ is as we desire. Define a structural colouring 
	\begin{align*}
	\chi : \text{hom}(\AA, \CC) &\rightarrow k
	\end{align*}
	We define two complimentary colourings. The first,
	\begin{align*}
	\chi_1: \text{hom}(\AA^2, \CC_2) &\rightarrow k \\
	\chi_1(f) &= \chi(U(f))
	\end{align*}
	Of course, there is an $f \in \text{hom}(\CC_1,\CC_2)$ such that $\chi_1$ takes at most $t_2$ many colours on $f\circ \text{hom}(\AA^2,\CC_1) $. Let $S_1\subseteq k$ be this set of colours.  Next, we define
	\begin{align*}
	\chi_2 :\text{hom}(\AA^1, \CC_1) &\rightarrow k\\
	\chi_2(h) &= \chi(U(f\circ h))
	\end{align*}
	So, there is a $g\in \text{hom}(\CC_0, \CC_1) $ such that $\chi_2$  takes $t_1$ many colours on $g\circ \text{hom}(\AA^1,\CC_0) $. Similarly, we let $S_2\subseteq k$ be this set of colours $\chi_2$ takes on this set. We claim that $U(f\circ g)$ is such that $\chi$ takes at most $t_1+t_2$ many colours on $U(f\circ g) \circ \text{hom}(\AA,\BB) $.\\
	\\
	There are two cases to consider. Take $h\in \text{hom}(\AA,\BB)$ and suppose there is an $h^* \in \text{hom}(\AA^1,\CC_0)$ such that $U(h^*)\circ \iota = h$ for some $\iota \in \text{Aut}(\AA) $. In this instance,
	\begin{align*}
	\chi(U(f\circ g ) \circ h ) &= \chi( U(f\circ g)\circ U( h^*) \circ \iota  ) \\
	&=\chi( U(f\circ g)\circ U( h^*)) \\
	&= \chi_2(g\circ h^*) \in S_2 \; \text{ as } g\circ h^* \in \text{hom}(\AA^1,\CC_1) 
	\end{align*}
	If there is no such $h^*$, by (Ref), there is an $h^*\in \text{hom}(\AA^2,\CC_0) $ such that $U(h^*)\circ \iota  = h$ for some $\iota \in \text{Aut}(\AA) $. We then have 
	\begin{align*}
	\chi(U(f\circ g) \circ h) &= \chi(U(f\circ g )\circ U( h^*  )\circ \iota) \\
	&=	\chi(U(f\circ g )\circ U( h^*  ) ) \\
	&= \chi_1(f\circ g\circ h^*  ) \in S_1
	\end{align*} 
	Consequently, the range of $\chi$ restricted to $U(f\circ g) \circ \text{hom}(\AA,\BB) $ only takes values in $S_1\cup S_2$ and $|S_1\cup S_2| \leq t$ as desired.    
\end{proof}
On its surface, this theorem may appear shallow. Consider the class of permuted graphs $\mathbf{PERGra} $, graphs with two orderings $(\GG,<,\prec) $. If we consider the canonical forgetful functor $U:\mathbf{PERGra} \rightarrow \LOGra $ via $U((\GG,<,\prec)) = (\GG,<) $, then we immediately see how crude the bound can be. Consider the cycle of length $n$ in $\LOGra$, call it $(\GG,<)$. There are exactly $\frac{(n-1)!}{2}$ many ways to add a linear order to it (there are exactly $n!$ many permutations of $n$ and the automorphisms of the n-cycle are isomorphic to the dihedral group which has order $2n$). At best, if $\mathbf{PERGra} $ has RP, this gives $t_{\LOGra}((\GG,<)) \leq \frac{(n-1)!}{2} $. But it is folklore that $\LOGra $ has RP. So this bound is not very helpful. Where this bound shines is where a precompact expansion might not be useful. For example, consider $\text{Age}(S(2))$. It was proved in [6], that any precompact expansion of $\text{Age}(S(2))$ could not lower the Ramsey degree of the triangle circuit below $2$. However, the above technique ensures that the Ramsey degree is exactly $2$ when paired with the following strengthening condition reminiscent of the expansion property. 
\begin{theorem}
	Let $\KK_1$ be a categorical precompact expansion of $\KK_2$ with forgetful functor $U:\KK_1\rightarrow \KK_2$. Suppose that $\forall \AA \in \KK_2$, $\exists \BB \in 
	\KK_2$ such that ${{\BB^i}\choose{\AA^j}} \neq \emptyset$ for all $i$ and $j$.
	If for $\AA\in \KK_2 $, $\text{max}_{i<m(\AA)} t_{\KK_1} (\AA^i) < \infty, $ then $m(\AA) \leq t_{\KK_2} (\AA) \leq  \sum\limits_{i< m(\AA)} t_{\KK_1}( \AA^i)$ (we say this $\BB$ satisfies the homogeneous condition).
\end{theorem}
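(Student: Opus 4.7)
The upper bound $t_{\KK_2}(\AA) \leq \sum_{i<m(\AA)} t_{\KK_1}(\AA^i)$ is already delivered by Theorem 4.1, so the plan is to establish the new lower bound $m(\AA) \leq t_{\KK_2}(\AA)$. The strategy is to exhibit, for every candidate $\CC \in \KK_2$, a coloring of $\binom{\CC}{\AA}$ in exactly $m(\AA)$ colors for which every copy of the homogeneous witness $\BB$ already realizes all $m(\AA)$ colors. This shows that no finite $\CC$ can do better than $m(\AA)$ colors against $\BB$, and therefore that $t_{\KK_2}(\AA) \geq m(\AA)$.

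Concretely, let $\BB \in \KK_2$ be the object supplied by the homogeneous condition, set $k = m(\AA)$, and fix any $\CC \in \KK_2$. By (Proj) choose an expansion $\CC^* \in \KK_1$. Define $\chi : \binom{\CC}{\AA} \to \{1,\ldots,m(\AA)\}$ by sending each copy $\AA^\prime$ to the unique index $j$ for which $\AA^\prime$ is supported by $\AA^j$ inside $\CC^*$; existence and uniqueness of $j$ come from (Ref). The first point to check is that $\chi$ is independent of which representative $f \in \text{hom}(\AA,\CC)$ with image $\AA^\prime$ is chosen, which follows because two such representatives differ by an element of $\Aut(\AA)$ and the uniqueness clause of (Ref) absorbs precomposition by automorphisms of $\AA$.

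Next, take any $\BB^\prime \in \binom{\CC}{\BB}$, realized as the image of some $f \in \text{hom}(\BB,\CC)$. Applying (Ref) to $f$ and $\CC^*$ yields an expansion $\BB^i$ of $\BB$, an arrow $f^* \in \text{hom}(\BB^i,\CC^*)$, and $g_0 \in \Aut(\BB)$ with $U(f^*) \circ g_0 = f$; since $g_0$ is an automorphism, the image of $U(f^*)$ is still $\BB^\prime$. The homogeneous hypothesis now supplies, for each $j \in \{1,\ldots,m(\AA)\}$, an arrow $h_j^* \in \text{hom}(\AA^j, \BB^i)$. The composition $f^* \circ h_j^* \in \text{hom}(\AA^j,\CC^*)$ maps through $U$ to a copy $\AA^\prime_j$ of $\AA$ inside $\BB^\prime$, and by the uniqueness part of (Ref) this copy must be supported by $\AA^j$, forcing $\chi(\AA^\prime_j) = j$. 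Hence $\chi$ attains every value in $\{1,\ldots,m(\AA)\}$ on $\binom{\BB^\prime}{\AA}$, so $\CC \not\rightarrow (\BB)_{m(\AA),\, m(\AA)-1}^{\AA}$.

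Since $\CC$ was arbitrary, this yields $t_{\KK_2}(\AA) \geq m(\AA)$, and combined with Theorem 4.1 gives the two-sided bound. The main obstacle I anticipate is the functorial compatibility of (Ref) with composition in the third step: one must verify that the expansion type of $\AA^\prime_j$ computed directly from its inclusion into $\CC$ coincides with the expansion $\AA^j$ arising from the factorization $\AA^j \to \BB^i \to \CC^*$. This is exactly where the uniqueness clause of (Ref) is indispensable, as it is what rules out a mismatch between the two ways of reading off the supporting expansion; once that compatibility is in hand, the rest of the argument is routine bookkeeping.
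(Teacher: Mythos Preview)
Your proposal is correct and follows essentially the same approach as the paper: fix an expansion $\CC^*$ of $\CC$, color each copy of $\AA$ by the index of its supporting expansion via (Ref), and use the homogeneous condition to show that every copy of $\BB$ in $\CC$ realizes all $m(\AA)$ colors. Your write-up is in fact more careful than the paper's, which omits the well-definedness check for $\chi$ and the compatibility-of-(Ref)-with-composition point you flag; both are handled exactly as you indicate, by the uniqueness clause in (Ref).
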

\begin{proof}
	It is clear that the upper bound is satisfied by Theorem 1.2. So now, it suffices to show the lower bound. For a given $\AA\in \KK_2$, choose a $\BB \in \KK_2$ satisfying the homogeneous condition. We show that there is a $\chi : {{\CC}\choose{\AA}} \rightarrow \{1,..,t_{\KK_2} (\AA) \}$ that takes $t_{\KK_2} (\AA) $ many values on ${ {\tilde{\BB}\choose{\AA}} } $ for any $\tilde{\BB} \in {{\CC}\choose{\BB} } $. Take any extension of $\CC$ in $\KK_1$, say $\CC^1$. Given any $\tilde{\AA} \in  {{\CC}\choose{\AA}}$, it is supported by a unique $\AA^i $ in $\CC^1$. Consequently, the map $\chi(\tilde{\AA}) = i$ is as required.\\
	\\
	To see this, any $\tilde{B} \in {{\CC}\choose{\BB} }  $ is supported by some expansion $\BB^j$ in $\CC^1$. Moreover, each copy $\tilde{A} \in {{\tilde{\BB}}\choose{\AA} }$ is supported by a unique $\AA^i$. Moreover, by our hypothesis, for each $i<m(\AA)$, there is some $\tilde{A} \in{{\tilde{\BB}}\choose{\AA} } $ supported by $\AA^i$. This gives $m(\AA)$ as a lower bound. 
\end{proof}
In the case that $\KK_1$ has RP, it becomes immediate that $t_{\KK_2}(\AA) = m(\AA)$. 
\subsection{Big Ramsey Degrees}
Computing big Ramsey degrees is often a very nontrivial task. However, no study of a classes Ramsey properties is complete without some analysis of potential big Ramsey degrees. As we have seen, categorical precompact expansions also allow us to compare infinite objects and consequently, we can compute big Ramsey degrees under some very strict conditions. 
\begin{theorem}
	Suppose $\KK_1$ is a categorical precompact expansion of $\KK_2$. Suppose $\FF_1 $ and $\FF_2 $ are Fra\"iss\'e structures of $\KK_1$ and $\KK_2$ respectively. Moreover, suppose that 
	\begin{itemize}
		\item ${{\FF_1}\choose{\FF_1}} = {{\FF_2}\choose{\FF_2}}$ (equivalently,  $\FF_1$ is the only expansion of $\FF_2$)
	\end{itemize}
	If for $\AA\in \KK_2 $, $\text{max}_{i<m(\AA)} T_{\KK_1} (\AA^i) < \infty, $ then $T_{\KK_2} (\AA) =  \sum\limits_{i< m(\AA)} T_{\KK_1}( \AA^i)$. 
\end{theorem}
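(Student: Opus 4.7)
The plan is to establish the two inequalities of $T_{\KK_2}(\AA) = T$, where $T := \sum_{i<m(\AA)} T_{\KK_1}(\AA^i)$, separately: the upper bound $\leq T$ by iterating the big Ramsey theorem for $\KK_1$ through the expansions $\AA^0,\ldots,\AA^{m(\AA)-1}$, and the lower bound $\geq T$ by assembling witness colourings on the expansions into a single colouring downstairs. The hypothesis $\binom{\FF_1}{\FF_1} = \binom{\FF_2}{\FF_2}$ is used in both directions to pass between copies of $\FF_1$ in $\FF_1$ and copies of $\FF_2$ in $\FF_2$.

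\textbf{Upper bound.} Given $\chi : \binom{\FF_2}{\AA} \to k$, define $\chi_i : \binom{\FF_1}{\AA^i} \to k$ by $\chi_i(\tilde{\AA^i}) = \chi(U(\tilde{\AA^i}))$; this is well defined on copies because functoriality moves $\Aut(\AA^i)$-orbits into $\Aut(\AA)$-orbits. Applying $T_{\KK_1}(\AA^0) < \infty$ produces $\tilde{\FF_1}_0 \in \binom{\FF_1}{\FF_1}$ on which $\chi_0$ takes at most $T_{\KK_1}(\AA^0)$ colours; iterating, for each successive $i$ we find $\tilde{\FF_1}_i \in \binom{\tilde{\FF_1}_{i-1}}{\FF_1}$ on which $\chi_i$ takes at most $T_{\KK_1}(\AA^i)$ colours, using $\tilde{\FF_1}_{i-1} \cong \FF_1$. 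The nested chain guarantees all bounds hold simultaneously on $\tilde{\FF_1} := \tilde{\FF_1}_{m(\AA)-1}$. By the hypothesis $U(\tilde{\FF_1}) \in \binom{\FF_2}{\FF_2}$, and by (Ref) every $\tilde{\AA} \in \binom{U(\tilde{\FF_1})}{\AA}$ has a unique supporting expansion $\tilde{\AA^i} \in \binom{\tilde{\FF_1}}{\AA^i}$ with $\chi(\tilde{\AA}) = \chi_i(\tilde{\AA^i})$; hence the range of $\chi$ on $\binom{U(\tilde{\FF_1})}{\AA}$ has size at most $T$.

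\textbf{Lower bound.} For each $i$, pick a witness $\chi_i^0 : \binom{\FF_1}{\AA^i} \to P_i$ with the $P_i$ chosen pairwise disjoint such that $\chi_i^0$ attains at least $T_{\KK_1}(\AA^i)$ colours on $\binom{\tilde{\FF_1}}{\AA^i}$ for every $\tilde{\FF_1} \in \binom{\FF_1}{\FF_1}$; such a $\chi_i^0$ exists by the minimality of $T_{\KK_1}(\AA^i)$. Assemble $\chi : \binom{\FF_2}{\AA} \to \bigsqcup_i P_i$ by $\chi(\tilde{\AA}) = \chi_i^0(\tilde{\AA^i})$, where $\tilde{\AA^i}$ is the unique supporting expansion of $\tilde{\AA}$ in $\FF_1$ supplied by (Ref). For any $\tilde{\FF_2} \in \binom{\FF_2}{\FF_2}$ the hypothesis lifts $\tilde{\FF_2}$ to a unique $\tilde{\FF_1} \in \binom{\FF_1}{\FF_1}$ with $U(\tilde{\FF_1}) = \tilde{\FF_2}$, and (Ref) gives a support decomposition $\binom{\tilde{\FF_2}}{\AA} = \bigsqcup_i U\bigl(\binom{\tilde{\FF_1}}{\AA^i}\bigr)$. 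Each summand contributes at least $T_{\KK_1}(\AA^i)$ distinct colours from the disjoint palette $P_i$, so $\chi$ attains at least $T$ colours on $\binom{\tilde{\FF_2}}{\AA}$, as required.

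\textbf{Main obstacle.} The delicate point is converting (Ref), which a priori furnishes reflections only up to an element of $\Aut(\AA)$, into a well-defined support partition of $\binom{\FF_2}{\AA}$ indexed by the expansions $\AA^i$. The uniqueness clause of (Ref) pins down $\AA^i$ itself even though the embedding $f^*$ is pinned down only up to $\Aut(\AA^*)$, so the assignment $\tilde{\AA} \mapsto (\AA^i, \tilde{\AA^i})$ descends to equivalence classes. One should also verify that the displayed disjoint-union decomposition is a bijection on copies: surjectivity follows from applying (Proj) and (Ref) inside $\tilde{\FF_1}$, while injectivity rests on uniqueness in (Ref). Once this bookkeeping is in place, both inequalities go through cleanly as described.
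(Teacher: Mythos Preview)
Your proposal is correct and follows essentially the same route as the paper: iterate the big Ramsey property of $\KK_1$ through the expansions $\AA^0,\dots,\AA^{m(\AA)-1}$ inside nested copies of $\FF_1$ for the upper bound, and glue together witness colourings on the $\binom{\FF_1}{\AA^i}$ with disjoint palettes for the lower bound, using the hypothesis $\binom{\FF_1}{\FF_1}=\binom{\FF_2}{\FF_2}$ to move between copies of $\FF_1$ and $\FF_2$. Your explicit discussion of why (Ref) yields a well-defined support partition $\binom{\FF_2}{\AA}=\bigsqcup_i U\bigl(\binom{\FF_1}{\AA^i}\bigr)$ at the level of copies is more careful than the paper, which simply invokes this decomposition as a known identification.
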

\begin{proof}
	Let $t=\sum\limits_{i< m(\AA)} T_{\KK_1}( \AA^i)$. First we show that $T_{\KK_2} (\AA) \leq t$, which only requires the first two assumptions. Take a colouring $\chi : {{\FF_2}\choose{\AA} } \rightarrow k$ with $k \geq t$. By lemma 1.1,
	\begin{align*}
	{{\FF_2}\choose{\AA} } &= \bigcup_{i<m(\AA)} {{\FF_1}\choose{\AA^i}}\\
	{{\FF_1}\choose{\FF_1}} &\subseteq {{\FF_2}\choose{\FF_2}} 
	\end{align*}
	So, $\chi$ induces a colouring on ${{\FF_1}\choose{\AA^i}} $ for each $i$. Start by finding an $\FF_1^{'} \in {{\FF_1}\choose{\FF_1}} $ such that $\chi\upharpoonright{{{\FF_1^{'}}\choose{\AA^0}}} $. Suppose by recursion, for some $k< m(\AA)$, we have $\FF_1^{(k)} \in {{\FF_1^{(k-1)}}\choose{\FF_1^{(k-1)} }} $ such that $\chi $ takes at most $T_{\KK_1}(\AA^j)$ many values on ${{\FF_1^{(k)}}\choose{\AA^j} } $ for $j\leq k$. If $k= m(\AA)-1$, we are done and as $\FF_1^{(k)} \in {{\FF_2}\choose{\FF_2}}$ and $\chi$ takes at most $t$ many values on ${{U(\FF_2^{(k)})}\choose{\AA}} $. If $k < m(\AA)-1$, then find $\FF_1^{(k+1)} \in {{\FF_1^{(k)}}\choose{\FF_1^{(k)}}}$ for which $\chi$ takes at most $T_{\KK_1}(\AA^{k})$ many values on ${{\FF_1^{(k)}}\choose{\AA^k}} $. This construction guarantees that we can construct $\FF_1^{(m(\AA) )} $ which witnesses $T_{\KK_2}(\AA) \leq t$ as desired.\\
	\\
	Next, we construct a colouring that witnesses $ T_{\KK_2}(\AA) \geq t$. This is not too complicated to do, given $\FF_1$ is the only expansion of $\FF_2$. That is because
	\begin{align*}
	{{\FF_1}\choose{\FF_1}} &= {{\FF_2}\choose{\FF_2}}
	\end{align*}
	For each ${{\FF_1}\choose{\AA^i}}$, we construct a function $\chi_i:{{\FF_1}\choose{\AA^i}} \rightarrow \{(0,i),...,(T_{\KK_1}(\AA^{i})-1,i) \} $ such that any $\FF_1^{'} \in { {\FF_1}\choose{\FF_1}}$ witnesses $\chi_i\upharpoonright{ {{\FF_1^{'}}\choose{\AA^i}  } } $ is surjective. Thus, we can define a function $\chi$ on ${{\FF_2}\choose{\AA} }$ by identifying ${{\FF_2}\choose{\AA} }$ with $\bigcup_{i<m(\AA)} {{\FF_1}\choose{\AA^i}}$ and setting $\chi\upharpoonright{{{\FF_1^{'}}\choose{\AA^i}}} = \chi_i$. The consequence is that for any $\FF_2^{'} \in { {\FF_2}\choose{\FF_2}} \subseteq  { {\FF_1}\choose{\FF_1}  }$, $\chi\upharpoonright{{{\FF_2^{'}}\choose{\AA}}}$ takes exactly $t$ many values. 
\end{proof} 
\section{Applications to $\SSSS(n) $}
In this section, we will finally put everything we've established so far to use. We will use the computed Ramsey degrees of objects from section 2.2, along with the machinery we've developed in section 3 to compute the Ramsey degrees of $\Age(\SSSS(n))$. But first, we must define what $\SSSS(n)$ is to begin with and deduce some properties it has. 
\subsection{The Structure $\SSSS(n) $}
\begin{definition}
	$\SSSS(n) = (S(n),\sigma_0,..,\sigma_{n-1})$ is the structure who's domain is a countably infinite dense subset of the unit circle with no two points making an angle $\frac{2\pi k}{n}$ ($k\in \ZZ$) and $\sigma_k$ is a binary relation with $\sigma_k(x,y)$ if and only if $\text{arg}( \frac{x}{y}) \in (\frac{2\pi k}{n}, \frac{2\pi (k+1)}{n})$.
\end{definition}
In the case of $n=2$, $\SSSS(n)$ can be viewed as a digraph, with $\sigma_0$ and $\sigma_1$ determining a direction. This is quite natural too because  $\neg \sigma_0(x,y) \iff \sigma_1(x,y)$. In fact, in this instance, $\SSSS(2)$ is the universal ultrahomogeneous tournament. Sadly, we do not have the exact same type of symmetry in the case of $n>2$, but instead we have the following.
\begin{lemma}
	Consider $\SSSS(n)$. We have $\sigma_k(x,y) \iff \sigma_{n-1-k}(y,x)$.
\end{lemma}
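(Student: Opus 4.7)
The plan is to unpack the definition of $\sigma_k$ in terms of complex arguments and to exploit the elementary identity $\arg(y/x) \equiv -\arg(x/y) \pmod{2\pi}$. First I would fix a convention, say $\arg$ taking values in $(0,2\pi)$, and observe that the hypothesis built into the definition of $\SSSS(n)$ — that no two points make an angle of $2\pi k/n$ for $k\in\ZZ$ — ensures $\arg(x/y)$ never coincides with any endpoint $2\pi k /n$, so for every ordered pair of distinct $x,y$ there is a unique $k\in\{0,\ldots,n-1\}$ with $\sigma_k(x,y)$. Thus both sides of the proposed biconditional behave as indicator functions of partitions of ordered distinct pairs, and it suffices to check one direction.

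Next comes the angle computation, which I regard as the substance of the lemma. Assume $\sigma_k(x,y)$ holds and set $\theta = \arg(x/y)\in (2\pi k/n,\, 2\pi(k+1)/n)\subseteq (0,2\pi)$. Since $y/x = (x/y)^{-1}$ and $\theta$ lies strictly inside $(0,2\pi)$, the canonical representative of $\arg(y/x)$ in $(0,2\pi)$ is $2\pi-\theta$. Translating the bounds on $\theta$ yields
\[
2\pi - \theta \,\in\, \left(2\pi - \tfrac{2\pi(k+1)}{n},\; 2\pi - \tfrac{2\pi k}{n}\right) \,=\, \left(\tfrac{2\pi(n-1-k)}{n},\; \tfrac{2\pi(n-k)}{n}\right),
\]
which is precisely the open arc defining $\sigma_{n-1-k}(y,x)$.

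Finally, I would observe that the argument is fully reversible: applying the same involution $\theta\mapsto 2\pi-\theta$ to the interval of $\sigma_{n-1-k}$ recovers the interval of $\sigma_k$. There is no real combinatorial obstacle here; the only delicate point is the arithmetic of $\arg$ modulo $2\pi$, and the irrational-angle condition on $S(n)$ neutralises any concern about endpoints or the ambiguity at $0\equiv 2\pi$. Conceptually the lemma simply records that complex inversion on the unit circle carries the $k$-th open arc of the $n$-fold partition to the $(n-1-k)$-th.
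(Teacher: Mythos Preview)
Your proposal is correct and follows essentially the same argument as the paper: both use the identity $\arg(x/y) = 2\pi - \arg(y/x)$ and translate the interval bounds accordingly. You are simply more explicit about the choice of branch for $\arg$ and about why the endpoint ambiguity does not arise, but the substance is identical.
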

\begin{proof}
	We use the fact that $\text{arg}(\frac{x}{y}) = 2\pi - \text{arg}(\frac{y}{x})$. Consequently, \\
	$\frac{2\pi k}{n} < \text{arg}(\frac{x}{y}) < \frac{2\pi (k+1)}{n} $ if and only if $\frac{2\pi (n-1-k)}{n }  < \text{arg}(\frac{y}{x}) < \frac{2\pi (n-k)}{n}$.
\end{proof}
The above suggests that the case of $n$ odd is slightly different from the even case. In the even case, there is a one to one correspondents between relations $\sigma_k$. So, one can view $ \text{Age}(\SSSS(n))$ as a class of edge coloured tournaments. That is, any member $\AA \in \text{Age}(\SSSS(n)) $ is cryptomorphic to an edge coloured tournament. In the case $n$ is odd, $\sigma_{\frac{n-1}{2}} $ can be interpreted as the disjointness relation. This also guarantees that our $\SSSS(3) $ is the same $\SSSS(3)$ one can see in \cite{Zucker}.
\begin{definition}
	We define the class $\mathbf{Tour}_k$ to be the collection of structures $(\mathbf{X}, c)$ where $\mathbf{X} \in \mathbf{Tour}$ and $c$ is a $k$ colouring of the arrows of $\mathbf{X}$.
\end{definition}
\begin{theorem}
	Suppose $ n= 2k $ for some $k\in \NN$. Then, there exists an injective functor $F:  \text{Age}(\SSSS(n)) \rightarrow \mathbf{Tour}_k$.
\end{theorem}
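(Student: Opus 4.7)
The plan is to define $F$ by ``folding'' the $n = 2k$ circular relations into $k$ colors on directed edges, using the pairing from the preceding lemma. Recall that for $n = 2k$, the indices $\{0, 1, \ldots, 2k-1\}$ pair up under $j \leftrightarrow n-1-j$ into exactly $k$ pairs, namely $(0, 2k-1), (1, 2k-2), \ldots, (k-1, k)$, with no fixed point (this is where the parity hypothesis is used). By the lemma, $\sigma_j(x,y) \iff \sigma_{n-1-j}(y,x)$, so the two members of each pair are ``reverse arrows'' of one another. Thus it is natural to choose the representatives $\{0, 1, \ldots, k-1\}$ as the color set.

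On objects I would define $F(\AA) = (\XX, c)$ where $\AA = (A, \sigma_0, \ldots, \sigma_{n-1})$, $\XX$ has underlying set $A$ with the tournament arrow $x \to y$ declared to hold iff $\sigma_j(x,y)$ for some $j < k$, and $c(x \to y) = j$ for that unique $j$. Well-definedness needs two small observations: (i) by the defining construction of $\SSSS(n)$, for every pair of distinct points $x \neq y$ exactly one $\sigma_j(x,y)$ holds; and (ii) if that unique $j$ satisfies $j \geq k$, then by the lemma $\sigma_{n-1-j}(y,x)$ holds with $n-1-j < k$, so we record the arrow in the opposite direction with color $n-1-j$. This produces a tournament on $A$ with each arc colored by an element of $\{0, \ldots, k-1\}$, i.e.\ an object of $\mathbf{Tour}_k$. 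On morphisms I would set $F(f) = f$ as a set function; since morphisms in $\Age(\SSSS(n))$ preserve every $\sigma_i$ and morphisms in $\mathbf{Tour}_k$ are embeddings that preserve arrow direction and color, the preservation of colored arcs by $F(f)$ is immediate from the definition of $F$ on objects. Functoriality is then trivial.

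For injectivity, there are really two claims to verify. Faithfulness on hom-sets is automatic because $F$ acts as the identity on underlying functions, so if $F(f) = F(g)$ as morphisms of $\mathbf{Tour}_k$ then $f = g$ as functions and hence as morphisms of $\Age(\SSSS(n))$. Injectivity on objects follows from the fact that $F$ is invertible on its image: given $(\XX, c) = F(\AA)$, one recovers $\sigma_j(x,y)$ for $j < k$ as the condition ``there is an arrow $x \to y$ of color $j$,'' and for $j \geq k$ as ``there is an arrow $y \to x$ of color $n-1-j$,'' by the lemma. Hence $\AA$ is determined by $F(\AA)$ up to identity, not merely up to isomorphism.

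I do not expect any genuine obstacle: the only subtlety is the parity hypothesis, which ensures the pairing $j \leftrightarrow n-1-j$ is fixed-point free so that no $\sigma_j$ is ``self-reversing'' and forces a symmetric (non-tournament) relation. If $n$ were odd, $j = (n-1)/2$ would be its own partner, and that relation would be symmetric (``disjointness''), obstructing a tournament interpretation; this is consistent with the remark in the text that the odd case behaves differently. The entire argument is thus a matter of bookkeeping the pairing and checking that the identity-on-underlying-sets assignment intertwines the two notions of morphism.
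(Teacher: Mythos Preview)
Your proposal is correct and follows essentially the same approach as the paper: define $F$ on objects by keeping only the relations $\sigma_j$ for $j<k$ as colored arrows, use the pairing lemma to see that this yields a tournament, and take $F$ to be the identity on underlying functions for morphisms. Your treatment of injectivity (separating faithfulness from injectivity on objects and recovering $\AA$ from $F(\AA)$) is in fact more explicit than the paper's, which simply asserts $\hom_{\Age(\SSSS(n))}(\AA,\BB)=\hom_{\mathbf{Tour}_k}(F(\AA),F(\BB))$; the one thing the paper adds beyond the theorem statement is an explicit example showing $F$ is not surjective when $k>1$.
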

\begin{proof}
	Consider a structure $\AA = (A, \sigma_0,..., \sigma_{n-1}) \in \text{Age}(\SSSS(n)) $. We send $\AA$ to a edge coloured digraph $\hat{\AA} = (A, \rightarrow_0,..,\rightarrow_{k-1}$ with $x\rightarrow_j y \iff \sigma_j(x,y)$. To see that $\hat{\AA}$ is an edge coloured tournament, take any distinct pair $x,y \in A$. There must be a $\sigma_j$ for which $\sigma_j(x,y)$. By our lemma, we may assume without loss of generality that $j<k$. So, $x\rightarrow_j y$. So, indeed $\hat{\AA} \in \text{Age}(\SSSS(n))$. Thus, $F(\AA) = \hat{\AA}$ is well defined. Moreover, by construction, $\text{hom}_{\text{Age}(\SSSS(n)) }(\AA,\BB) = \text{hom}_{\textbf{Tour}_k} ( F(\AA),F(\BB))$.\\
	\\
	I claim that this functor is not an isomorphism if $k>1$. To see this, consider the tournament $\tilde{\AA}= (\{a,b,c\}, \rightarrow_0, \rightarrow_1 )$ with $a\rightarrow_0 b $ $a\rightarrow_0 c$ and $b\rightarrow_1 c $. There is no such member of $\text{Age}(\SSSS(4)) $ that can represent it. To see this, split $S(4)$ along the standard $4$ quadrants in the Euclidean plane $Q_1,Q_2,Q_3$ and $Q_4$. We look for an $x,y,z \in S(4)$ that induces $\AA$. Without loss of generality, take $x\in Q_1$ with $\text{arg}(x) < \frac{\pi}{8}$. We need a $y$ and $z$ for which $\sigma_0(x,y)$ and $\sigma_0(x,z) $. However, this forces $\text{arg}(y), \text{arg}(z) > \frac{\pi}{8}$ and in either $Q_1$ or $Q_2$. No matter what we choose for $y$, it becomes impossible to choose a $z$ for which $\sigma_1(y,z) $ or $\sigma_1(z,y)$ as $\text{arg}(z) < \frac{\pi}{8} + \frac{\pi}{4}$. Similar constructions force $F$ to be strictly injective for $k>1$.
\end{proof}
As we can see from the above, we cannot use the exact some techniques used in \cite{ColouredMilliken}. In particular, we cannot identify $\Age(\SSSS(n))$ with $\mathbf{Tour}_k $, but rather as a subcategory due to metric restrictions on $\SSSS(n) $. But of course, this tells us nothing about the Ramsey properties of either class. To see this, one can view \cite{SokicPoset}, where a variety of classes of Posets are proven not to have RP despite being subclasses of classes with RP and vice versa. 
\subsection{Computation of Finite Ramsey Degrees}
Following the technique in \cite{ColouredMilliken}, we will show the class $\Age(\QQ_n) $ of partitioned linear orders is a categorical precompact expansion of $\Age(\SSSS(n)) $.  
\begin{definition}
	Let $\QQ_n = (\QQ, \mathcal{P}_1,...,\mathcal{P}_n)$ where $ \mathcal{P}_i$ form a partition of $\QQ$ with each $\mathcal{P}_i$ order isomorphic to $\QQ$. $\text{Age}(\QQ_n)$ is a Fraisse class with the Ramsey Property.
\end{definition}
Take an $\XX \in \text{Age}(\QQ_n)$. We show how one can construct a member of $\text{Age}(\SSSS(n))$ with it. Setting $\XX = (X,\mathcal{P}_1^{\XX},..,\mathcal{P}_n^{\XX})$, we define $p(\XX) = (X,\sigma_0^{p(\XX)},.., \sigma_{n-1}^{p(\XX)})$ with
\begin{align*}
(\forall i\leq j) (\forall x\in\mathcal{P}_i^{\XX}) (\forall y\in \mathcal{P}_j^{\XX})\;  x<^{\XX}y  \iff \sigma_{n-1-(j-i)}^{p(\XX)} (x,y) \\
(\forall i\leq j) (\forall x\in \mathcal{P}_i^{\XX}) (\forall y\in \mathcal{P}_j^{\XX})\; y<^{\XX}x \iff \sigma_{j-i}^{p(\XX)} (x,y)
\end{align*}
In the case of $n=2$, our $p$ coincides exactly with the one (implicitly) defined in \cite{ColouredMilliken}.  I claim that this map is an expansion. 
\begin{prop}
	The map $p: \text{Age}(\QQ_n) \rightarrow \text{Age}(\SSSS(n))$ is a forgetful functor that witnesses $\Age(\QQ_n)$ is a categorical precompact expansion of $\Age(\SSSS(n))$.
\end{prop}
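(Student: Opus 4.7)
The plan is to check in sequence that $p$ extends to a functor into $\Age(\SSSS(n))$, the forgetfulness condition (C), projectivity (Proj), the reflection condition (Ref), and finally precompactness of fibres. To make $p$ a functor, I would set $p(f) := f$ on underlying sets. Given $f : \XX \to \YY$ in $\Age(\QQ_n)$, the fact that $f$ preserves both the linear order and the partition, combined with the biconditionals defining $p(\XX)$ and $p(\YY)$, forces $f$ to preserve each $\sigma_k$, so $p(f)$ is a morphism in $\Age(\SSSS(n))$. Functoriality and (C) are then immediate, since $p$ acts as the identity on hom-sets.

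The first substantive step is to show $p(\XX) \in \Age(\SSSS(n))$ by constructing an explicit embedding. Enumerate $\XX$ as $x_1 < \dots < x_m$ in its linear order, and place $x_\ell$ on the unit circle at angle $\alpha_\ell := 2\pi(i(x_\ell)-1)/n + \ell \epsilon$, where $i(x_\ell)$ is the partition index of $x_\ell$ and $\epsilon > 0$ is small enough that $m\epsilon < 2\pi/n$. A direct computation of $\arg(e(x_\ell)/e(x_k))$, splitting on whether $x_\ell$ and $x_k$ share a part, should show that this angle lands in exactly the open arc $(2\pi k / n, 2\pi(k+1)/n)$ prescribed by the $p$-rule, so $e$ is a genuine embedding $p(\XX) \hookrightarrow \SSSS(n)$. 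For (Proj), given $\AA \in \Age(\SSSS(n))$, I would fix any embedding $e : \AA \hookrightarrow \SSSS(n)$, define $a \in P_i$ precisely when $\arg(e(a)) \in [2\pi(i-1)/n, 2\pi i/n)$ and $a <_\XX b$ precisely when $\arg(e(a)) < \arg(e(b))$, and invoke ultrahomogeneity of $\SSSS(n)$ to conclude that $e$ may be chosen (up to a rotation, which is an automorphism of $\SSSS(n)$) so that the derived $\XX$ satisfies $p(\XX) = \AA$.

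The remaining conditions are routine. For (Ref), given $f : \AA \to \BB$ in $\Age(\SSSS(n))$ and an expansion $\BB^*$ of $\BB$, pull back the linear order and partition of $\BB^*$ along $f$ to obtain $\AA^* \in \Age(\QQ_n)$, with $f^* := f$ as the lift (allowing for the corrective automorphism of $\AA$ permitted by the definition). Precompactness is immediate since a structure of size $m$ admits at most $n^m \cdot m!$ partition-plus-order expansions, a finite quantity. The main obstacle is expected to be the case analysis inside the embedding step: in particular, controlling how pairs of elements in different parts fall on the ``correct'' side of the two adjacent arcs that the raw range of $\arg(e(x_\ell)/e(x_k))$ can meet. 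The auxiliary perturbation $\ell\epsilon$ is designed to break this ambiguity uniformly in the linear order on $\XX$, but verifying that it does so across all pair types requires a tedious (if routine) check.
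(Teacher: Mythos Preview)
Your overall architecture matches the paper's: verify (C) by observing that $p$ is the identity on underlying maps, verify (Proj) by pulling a partition-plus-order off a concrete realisation of $\AA$ inside $\SSSS(n)$, verify (Ref) by restricting an expansion of $\BB$ along $f$, and bound the fibre size by a crude count. The extra step of explicitly embedding $p(\XX)$ into $\SSSS(n)$ is a nice addition that the paper leaves implicit.

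There is, however, a real gap in your (Proj) construction. Ordering $\XX$ by the \emph{raw} argument $\arg(e(a))$ cannot work. With that order, whenever $a\in P_i$ and $b\in P_j$ with $i<j$ you automatically get $a<_{\XX} b$, so the resulting $\XX$ is always ``convex'' (each $P_i$ is an interval of the linear order). Feeding such an $\XX$ through the $p$-rule forces a single fixed relation $\sigma_{j-i}(b,a)$ between any cross-partition pair. But in $\AA$ itself, a pair with $a\in Q_i$, $b\in Q_j$ can satisfy either $\sigma_{j-i}(b,a)$ or $\sigma_{j-i-1}(b,a)$, depending on the relative positions of $a$ and $b$ \emph{within} their respective arcs. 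No choice of embedding $e$ or global rotation repairs this: rotations are automorphisms of $\SSSS(n)$ but they cannot make every cross-quadrant pair land on the same side of the relevant arc boundary simultaneously. The paper's fix is exactly to record this within-arc information: it rotates each $Q_k$ back to $Q_1$ by $e^{-2\pi i(k-1)/n}$ and orders by the resulting argument, i.e.\ by $\arg(e(a)) \bmod 2\pi/n$ rather than by $\arg(e(a))$ itself. Once you make that change, your (Proj) argument goes through. The same issue will surface in the ``$a>b$'' half of your forward-embedding verification (the case you flag as tedious): the placement $\alpha_\ell = 2\pi(i(x_\ell)-1)/n + \ell\epsilon$ yields $\sigma_{n-(a-b)}$ where the $p$-rule demands $\sigma_{a-b}$, so that case analysis will not close as written and needs the same within-arc bookkeeping.
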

\begin{proof}
	We need to show $p$ satisfies (Ref), (Proj) and (C). For any $\XX, \YY \in \text{Age}(\QQ_n)$, $p:\text{hom}(\XX,\YY) \rightarrow   \text{hom} (p(\XX), p(\YY))$ is injective. Take $f:
	\XX\rightarrow \YY$ an embedding. So, $\forall x,y \in X$ if $x<^{\XX} y$ then $f(x)<^{\YY} f(y)$. Similarly, if $x\in \mathcal{P}_i^{\XX}$, then $f(x) \in \mathcal{P}_i^{\YY}$. Consequently, by how we defined $p$, $\sigma_{i}^{p(\XX)} (x,y) \rightarrow \sigma_i^{p(\YY)} (f(x),f(y))$ which means $f\in \text{hom}(p(\XX), p(\YY))$ and $p$ satisfies (C). \\
	\\
	Now we must show that $p$ is surjective over objects. Take an arbitrary $\AA \in \text{Age}(\SSSS(n)) $. We split $S(n)$ in to $n$ quadrants defined by $Q_k = \{ x\in S(n) : \text{arg}(x) \in (\frac{2\pi (k-1) }{n},\frac{2\pi k }{n} ) \}$ for $k=1,..,n$. Then, we define a partition
	\begin{align*}
	\mathcal{P}_k^{\XX} &= \{e^{-\frac{2\pi i(k-1)}{n}}x : x\in A\cap Q_k  \}
	\end{align*}
	Setting $X= \bigcup_{k=1}^n \mathcal{P}_k^{\XX}$, we see that $X\subseteq Q_1$ and so $\sigma_0^{\SSSS(n)}$ induces a linear order on $X$. The model $\XX = (X,\mathcal{P}_1^{\XX},...,\mathcal{P}_n^{\XX}) $ with order $<^{\XX} = \sigma_0 \upharpoonright{X}$ is an element of $\QQ_n$. Now, we need only check that $p(\XX) = \AA$. To see this, notice that if $x\in \mathcal{P}_k^{\XX}$ and $y\in \mathcal{P}_j^{\XX} $ with $k\leq j$ and $x<^{\XX} y$, then $\text{arg}(\frac{y}{x} ) \in (0, \frac{2\pi}{n}) $. So,
	\begin{align*}
	\text{arg}( \frac{e^{\frac{2\pi i(j-1)}{n}}y }{e^{\frac{2\pi i(k-1)}{n}}x } ) &= \text{arg} (e^{2\pi i \frac{(j-k)}{n}} \frac{y}{x})  \\
	&= \frac{2\pi (j-k)}{n}  + \text{arg}( \frac{y}{x}) \in ( \frac{2\pi (j-k)}{n}, \frac{2\pi (j-k+1)}{n}
	\end{align*}
	So then, in $\AA$, we have $\sigma_{j-k}^{\AA} (e^{\frac{2\pi i(j-1)}{n}}y),e^{\frac{2\pi i(k-1)}{n}}x)$. But, in $p(\XX)$, we have $\sigma_{n-1-(j-k)}^{p(\XX)}(x,y) \iff \sigma_{j-k}^{p(\XX)} (y,x)$. The case for $x\in \mathcal{P}_k^{\XX}$ and $y\in \mathcal{P}_j^{\XX} $ with $k\leq j$ and $y<^{\XX} x $ is near identical. It becomes clear that $p(\XX)$ is isomorphic to $\AA$. Thus, $p$ satisfies (Proj).  \\
	\\
	If $f: \AA \rightarrow \BB $ is an embedding for $\AA,\BB \in \Age(\SSSS(n)) $, then without loss of generality, we may assume $A\subseteq B$. If we do the reversal procedure outlined above to get an expansion $\XX$ of $\BB $ (it will be shown in Proposition 2 that all expansions can be gotten via the reversal procedure) then it is clear that by ignoring the points in $B\setminus A$, we have also done the reversal procedure to $A$ to get an expansion $\YY$ with $f^*:\YY\rightarrow \XX $ an embedding. Moreover, it is clear that the image of $f^*$ supports the image of $f$. Thus, $p$ satisfies (Ref) and hence, is an expansion. 
	\\
	\\
	Precompactness is a trivial consequence of the fact that $p$ sends structures with universe of size $N$ to structures with universe of size $N$. Since there are exactly $n! n^N$ many members of $\Age(\QQ_n)$ with cardinality $N$, any $\AA \in \Age(\SSSS(n))$ has $m(\AA) \leq n! n^N$. 
\end{proof}
\begin{corollary}
	The class $\text{Age}(\SSSS(n))$ has finite Ramsey degrees.
\end{corollary}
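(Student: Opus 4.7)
The plan is to apply the upper bound theorem of Section 4.1, taking $\KK_1 = \Age(\QQ_n)$ and $\KK_2 = \Age(\SSSS(n))$, and feeding in the proposition just established. The substantive combinatorial work---verifying (C), (Proj), (Ref), and precompactness for $p$---has already been completed in the proposition, so the corollary reduces to a short assembly.

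First I would recall from the preliminaries in Section 2.3 that $\Age(\QQ_n) = \mathbf{LO}_n$ has the Ramsey Property, so every $\XX \in \Age(\QQ_n)$ satisfies $t_{\Age(\QQ_n)}(\XX) = 1$. In particular, for any $\AA \in \Age(\SSSS(n))$ with expansions $\AA^1, \ldots, \AA^{m(\AA)}$ in $\Age(\QQ_n)$ along $p$, each expansion has Ramsey degree $1$, and thus
\[ \max_{i < m(\AA)} t_{\Age(\QQ_n)}(\AA^i) \;=\; 1 \;<\; \infty, \]
which is the hypothesis needed to invoke the upper bound theorem.

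Applying that theorem then yields
\[ t_{\Age(\SSSS(n))}(\AA) \;\leq\; \sum_{i<m(\AA)} t_{\Age(\QQ_n)}(\AA^i) \;=\; m(\AA), \]
and the concluding paragraph of the preceding proposition established the uniform bound $m(\AA) \leq n!\, n^{|A|} < \infty$. Since $\AA$ was arbitrary, every object of $\Age(\SSSS(n))$ has a finite Ramsey degree, as required.

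There is essentially no obstacle here; the result is a genuine corollary of the proposition, the Ramsey Property of $\Age(\QQ_n)$, and the upper bound theorem. The one point worth a brief sanity check is that $t_{\Age(\QQ_n)}(\AA^i) = 1$ really is what RP means at the level of individual objects (it is, by definition of the Ramsey degree), and that the bound $m(\AA) \leq n!\, n^{|A|}$ applies uniformly---which is immediate since $p$ preserves underlying sets and there are only $n!\, n^{|A|}$ structures in $\Age(\QQ_n)$ with a fixed underlying set of cardinality $|A|$.
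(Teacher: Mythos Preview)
Your proposal is correct and follows the same approach as the paper: invoke the Ramsey Property of $\Age(\QQ_n)$ so that each expansion has Ramsey degree $1$, apply the upper bound theorem of Section~4.1 to get $t_{\Age(\SSSS(n))}(\AA)\leq m(\AA)$, and finish with the precompactness bound $m(\AA)\leq n!\,n^{|A|}$ from the preceding proposition. The paper's proof is just a two-line compression of exactly this argument.
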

\begin{proof}
	Note that $\text{Age}(\QQ_n)$ has the Ramsey property. So, for any $\AA \in \Age(\SSSS(n)) $, $ t_{\Age(\SSSS(n))} (\AA) \leq m(\AA) \leq n! n^{|\AA|} $.
\end{proof}
Note that we have a very crude upper bound. However, using the fact that there are exactly ~$n! n^N$ many $\XX \in \text{Age}(\QQ_n)$ with $|X| = N$, and that expansions of any $\AA \in \text{Age}(\SSSS(n))$ are unique, 
\begin{align*}
\sum\limits_{\substack{\AA \in \text{Age}(\SSSS(n)) \\ |A|= N}} m(\AA) &\leq  n! n^N
\end{align*}
With equality in the case that we can show the condition of theorem 3.2. So indeed, the value of $m(\AA)$ should be much less than $n! n^N $. In fact, claim that we can actually compute the value.
\begin{prop}
	For any $\AA \in \text{Age} (\SSSS(n))  $, $m(\AA) = \frac{n |A|}{|\text{Aut}(\AA)|} $.
\end{prop}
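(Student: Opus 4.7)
The plan is to reduce the computation to counting labelings on a fixed underlying set by an orbit--stabilizer argument, and then to exhibit a bijection that establishes the count. Let $E(\AA)$ denote the set of pairs $(\mathcal{P}, <)$ on the underlying set $A$ of $\AA$ such that $p(A, \mathcal{P}, <) = \AA$. The automorphism group $\Aut(\AA)$ acts on $E(\AA)$ by transporting the partition and linear order along the automorphism. Since any structure in $\Age(\QQ_n)$ carries a linear order and is therefore rigid, the stabilizer of every labeling under this action is trivial, so each orbit has size $|\Aut(\AA)|$, and orbits correspond bijectively to isomorphism classes of expansions. Thus $m(\AA) = |E(\AA)|/|\Aut(\AA)|$, and it remains to prove $|E(\AA)| = n|A|$.

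To prove this, I would exhibit a bijection $\Phi\colon E(\AA) \to A \times \{1,\ldots,n\}$ defined by $\Phi(\mathcal{P}, <) = (a_0, k_0)$, where $a_0$ is the $<$-minimum of $A$ and $k_0$ is the unique index with $a_0 \in \mathcal{P}_{k_0}$. For the inverse, given $(a_0, k_0) \in A \times \{1, \ldots, n\}$, one chooses any embedding $\AA \hookrightarrow \SSSS(n)$ (which exists by $\AA \in \Age(\SSSS(n))$) and rotates it so that the image of $a_0$ lies just past the boundary $e^{2\pi i (k_0 - 1)/n}$, i.e.\ at angle $2\pi(k_0 - 1)/n + \epsilon$ for $\epsilon>0$ small. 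The reversal procedure from Proposition~1 applied to this embedding produces a labeling in $E(\AA)$ whose minimum is $a_0$ and whose part of $a_0$ is $\mathcal{P}_{k_0}$ by construction, giving a right inverse to $\Phi$.

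The main obstacle will be injectivity of $\Phi$: showing that the labeling produced by the procedure above is the only element of $E(\AA)$ with minimum $a_0 \in \mathcal{P}_{k_0}$. The key observation is that once $a_0$ is forced to be the global minimum inside $\mathcal{P}_{k_0}$, then for every other $b \in A$ the relation $\sigma_m^\AA(a_0, b)$ (for the unique $m$ determined by $\AA$) together with the constraint $a_0 < b$ pins down the part of $b$: by inspecting the defining formula for $p$, only one of the two a priori candidates for $k_b$ modulo $n$ is compatible with $a_0 < b$, the other forcing $b < a_0$ and contradicting minimality. Once the partition is thus determined, the linear order on $A$ is forced as well, because applying the $p$-formula to any other pair $(b, b')$ shows that the parts $k_b, k_{b'}$ together with $\sigma^\AA(b, b')$ dictate whether $b < b'$ or $b' < b$. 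This finishes the bijection and yields $|E(\AA)| = n|A|$, which combined with the orbit--stabilizer reduction gives $m(\AA) = n|A|/|\Aut(\AA)|$.
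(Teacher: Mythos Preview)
Your orbit--stabilizer reduction is correct and is essentially what the paper does implicitly in its final sentence (``up to automorphism of $\AA$''). The strategies then diverge: the paper works geometrically, fixing an embedding of $\AA$ into the circle and counting the distinct quadrant placements of the boundary lines by an induction on $|A|$ (getting $|A|$ placements, then $n$ choices of which quadrant becomes $\mathcal{P}_1$), whereas you attempt a direct combinatorial bijection $\Phi\colon E(\AA)\to A\times\{1,\dots,n\}$ reading off the $<$-minimum and its part.

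The gap is in your injectivity argument. You assert that, given $a_0\in\mathcal{P}_{k_0}$ minimal and $\sigma_m^{\AA}(a_0,b)$, ``only one of the two a priori candidates for $k_b$ is compatible with $a_0<b$, the other forcing $b<a_0$.'' This is false. Working through the defining formulas for $p$, the condition $a_0<b$ together with $\sigma_m(a_0,b)$ forces $|k_b-k_0|=n-1-m$, and \emph{both} signs are compatible with $a_0<b$ whenever both values land in $\{1,\dots,n\}$. Concretely, take $n=5$, $|A|=2$, $\sigma_2(a_0,b)$ (hence $\sigma_2(b,a_0)$), and $k_0=3$: then both $(k_b=5,\;a_0<b)$ and $(k_b=1,\;a_0<b)$ satisfy $p(A,\mathcal{P},<)=\AA$, and both map under $\Phi$ to $(a_0,3)$. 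So $\Phi$ is not injective on the set of labelings satisfying the $p$-formulas. The candidates that force $b<a_0$ are $k_b=k_0\pm m$, not $k_0\pm(n-1-m)$; when $m\ge (n-1)/2$ and $k_0$ sits in the middle, all four values can lie in range and you genuinely get two preimages.

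What rescues the count is the geometric content you invoke for surjectivity but abandon for injectivity: a labeling in $E(\AA)$ comes from a rotation parameter $\theta\in[0,2\pi)$ via the reversal procedure, and the pair $(a_0,k_0)$ pins down $\theta$ exactly (the $<$-minimum is the point immediately counterclockwise of a boundary, and $k_0$ says which boundary). The paper's inductive count of quadrant placements is precisely this geometric injectivity, packaged differently. Your combinatorial argument via the $p$-formula alone cannot recover it, because the formula does not by itself exclude the spurious second candidate; you need to use that the labeling arises from an actual circle embedding.
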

\begin{proof}
	To see this, we use the fact that any expansion of $\AA$ can be gotten from the reversible process outlined in proposition 1. The process has two main steps, first defining a partition of $S(n)$ in to $n$ quadrants $Q_1,...,Q_n$ of the form $Q_k =  \{ x\in S(n) : \text{arg}(x) \in (\frac{2\pi (k-1)  }{n} +\theta ,\frac{2\pi k }{n} +\theta ) \}$ for some $\theta $. Viewing $A$ as a subset of $S(n)$, we need to determine which members of $A$ belong to which quadrants $Q_k$. Note, since we are working with a copy of $\AA$ as it appears in $\SSSS(n)$, we are doing the reversal procedure up to an automorphism of $\AA$ i.e we are counting up to automorphism.\\
	\\
	\textbf{Claim:} There are exactly $|A|$ many unique ways to partition $A$ in this manner.
	\begin{proof}
		We do this by induction on the cardinality of $A$. The case $|A|=1$ is trivial. Suppose it is true for structures with $|A|=k$. Suppose we have a structure $\AA$ with $|A| = k+1$. Choose the $x\in A$ which minimizes $\text{min}_{k} |\text{arg}(\frac{y}{e^{\frac{2\pi i k}{n}  }})| $. While it is clear a member must witness a minima, it's uniqueness comes from the fact that if two members have the same minima, they must differ in argument by a factor of the form $\frac{2\pi k}{n} $ which is a contradiction. Set $x = x_1$ and label the members of $A$ in the order they appear rotating counter clockwise from $x_1$. We now have $A= \{x_1,...,x_{k+1}\} $. Let $\AA^{-}$ be the structure induced by $A^{-} = \{x_2,...,x_{k+1} \}$. We let $Q_j^\theta (B)  = \{ y\in S(n) \cap B : \text{arg}(y) \in (\frac{2\pi (j-1)  }{n} +\theta ,\frac{2\pi j }{n} +\theta )  \} $. By our induction hypothesis, there are $k$ many increasing $\theta_j \in (0,\frac{2\pi i }{n})$ such that $\{ \{ Q_l^{\theta_j} (A^{-})  :l=1,...,n \} : j=1,...,k  \} $ exhausts all unique quadrant partitions of $A^{-}$. It is clear that $\{ \{ Q_l^{\theta_j} (A)  :l=1,...,n \} : j=1,...,k  \}$ has $k$ many members and by the minimality of $x_1$, we may assume that $\theta_j$ were chosen so that there is a unique $l$ for which $x\in Q_l^{\theta_j} (A)$ for all $j$. Ofcourse, also by minimality, we can choose $\epsilon $ larger than the minimal distance from $x$ to a border of a $Q_k$ yet smaller than any other distance, so that $ \{ Q_l^{\theta_k+\epsilon} (A)  :l=1,...,n \} \notin \{ \{ Q_l^{\theta_j} (A)  :l=1,...,n \} : j=1,...,k  \}$.  Notice that this collection is the exact same as $\{Q_j \cap A : j=1,...,n  \}$, so by the cyclic nature of $Q_l^{\theta} (A)$, we have exactly $k+1$ many partitions of $A$. Namely, $\{\{ Q_l^{\theta_k+\epsilon} (A)  :l=1,...,n \} \} \cup   \{ \{ Q_l^{\theta_j} (A)  :l=1,...,n \} : j=1,...,k  \} $. See figure 1 for an example of some quadrant placements
		\begin{figure}
			\centering
			
			\begin{tikzpicture}
			\draw  circle (1cm); 
			\node[circle, fill=black,scale=0.25] (x_1) at (5 :1cm) {$x_1$ };
			\node[circle, fill=black,scale=0.25] (x_2) at (100 :1cm) {$x_2$};
			\node[circle, fill=black,scale=0.25] (x_3) at (200 :1cm) {$x_3$};
			\node[circle, fill=black,scale=0.25] (x_4) at (300 :1cm) {$x_4$};
			\node[circle, fill=black, scale=0.1] (east) at (0 :1cm) {}; 
			\node[circle, fill=black, scale=0.1] (north) at (90 :1cm) {};
			\node[circle, fill=black, scale=0.1] (west) at (180 :1cm) {};   
			\node[circle, fill=black, scale=0.1] (south) at (270 :1cm) {};

			\draw[black] (north) edge (south);
			\draw[black] (west) edge (east);
			
			\end{tikzpicture}
			\begin{tikzpicture}
			\draw  circle (1cm); 
			\node[circle, fill=black,scale=0.25] (x_1) at (5 :1cm) {$x_1$ };
			\node[circle, fill=black,scale=0.25] (x_2) at (100 :1cm) {$x_2$};
			\node[circle, fill=black,scale=0.25] (x_3) at (200 :1cm) {$x_3$};
			\node[circle, fill=black,scale=0.25] (x_4) at (300 :1cm) {$x_4$};
			\node[circle, fill=black, scale=0.1] (east) at (8 :1cm) {}; 
			\node[circle, fill=black, scale=0.1] (north) at (98 :1cm) {};
			\node[circle, fill=black, scale=0.1] (west) at (188 :1cm) {};   
			\node[circle, fill=black, scale=0.1] (south) at (278 :1cm) {};

			\draw[black] (north) edge (south);
			\draw[black] (west) edge (east);
			
			\end{tikzpicture}
			\begin{tikzpicture}
			\draw  circle (1cm); 
			\node[circle, fill=black,scale=0.25] (x_1) at (5 :1cm) {$x_1$ };
			\node[circle, fill=black,scale=0.25] (x_2) at (100 :1cm) {$x_2$};
			\node[circle, fill=black,scale=0.25] (x_3) at (200 :1cm) {$x_3$};
			\node[circle, fill=black,scale=0.25] (x_4) at (300 :1cm) {$x_4$};
			\node[circle, fill=black, scale=0.1] (east) at (14 :1cm) {}; 
			\node[circle, fill=black, scale=0.1] (north) at (104 :1cm) {};
			\node[circle, fill=black, scale=0.1] (west) at (194 :1cm) {};   
			\node[circle, fill=black, scale=0.1] (south) at (284 :1cm) {};

			\draw[black] (north) edge (south);
			\draw[black] (west) edge (east);
			
			\end{tikzpicture}
			\begin{tikzpicture}
			\draw  circle (1cm); 
			\node[circle, fill=black,scale=0.25] (x_1) at (5 :1cm) {$x_1$ };
			\node[circle, fill=black,scale=0.25] (x_2) at (100 :1cm) {$x_2$};
			\node[circle, fill=black,scale=0.25] (x_3) at (200 :1cm) {$x_3$};
			\node[circle, fill=black,scale=0.25] (x_4) at (300 :1cm) {$x_4$};
			\node[circle, fill=black, scale=0.1] (east) at (25 :1cm) {}; 
			\node[circle, fill=black, scale=0.1] (north) at (115 :1cm) {};
			\node[circle, fill=black, scale=0.1] (west) at (205 :1cm) {};   
			\node[circle, fill=black, scale=0.1] (south) at (295 :1cm) {};

			\draw[black] (north) edge (south);
			\draw[black] (west) edge (east);
			
			\end{tikzpicture}
			\caption{All possible quadrant placements for a size four member of $\Age(\SSSS(4)) $.}
			\label{fig:my_label}
		\end{figure}
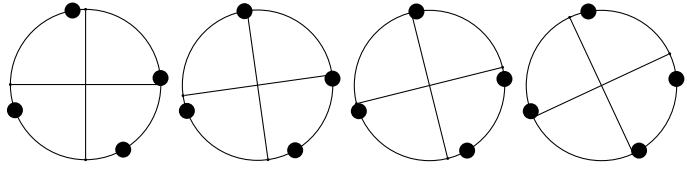
		
	\end{proof}
	After we do this, we decide which member of the partition we will rotate all the other points to and then let the linear order be defined by $\sigma_1^{\AA}$. There are exactly $n$ quadrants. Given that making any different choice in step one and two, would necessarily lead to a different linear order under $\sigma_1^{\AA}$, we have exactly $n|A|$ unique linear orders up to automorphism of $\AA$. Or rather, $m(\AA) = \frac{n|A|}{|\text{Aut}(\AA)|} $. 
\end{proof}
If we want an exact computation of Ramsey degrees, we still need to use theorem 3.2. We show the remaining fact now. 
\begin{theorem}
	$\forall \AA \in \Age(\SSSS(n))$, $t_{\Age(\SSSS(n))}  (\AA) = \frac{n|A|}{|\Aut(\AA)|}$. 
\end{theorem}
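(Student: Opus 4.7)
The plan is to trap $t_{\Age(\SSSS(n))}(\AA)$ between matching upper and lower bounds, both equal to $m(\AA) = \frac{n|A|}{|\Aut(\AA)|}$ from Proposition 2. Since $\Age(\QQ_n)$ enjoys the Ramsey property, every expansion $\AA^i$ of $\AA$ has Ramsey degree $t_{\Age(\QQ_n)}(\AA^i) = 1$, so Theorem 3.1 applied to the expansion functor $p : \Age(\QQ_n) \to \Age(\SSSS(n))$ of Proposition 1 immediately gives $t_{\Age(\SSSS(n))}(\AA) \leq \sum_{i < m(\AA)} 1 = m(\AA)$.

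For the matching lower bound I would invoke Theorem 3.2, which supplies $m(\AA) \leq t_{\Age(\SSSS(n))}(\AA)$ as soon as the homogeneous condition is verified: there exists $\BB \in \Age(\SSSS(n))$ such that $\binom{\BB^i}{\AA^j}$ is nonempty for every pair of expansions $(\BB^i, \AA^j)$. The entire content of the lower bound therefore reduces to producing such a $\BB$.

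To supply $\BB$, I would fix a realization of $\AA$ as a subset of $S(n)$ and set $\BB = \bigcup_{k=1}^N e^{i\alpha_k} A \subseteq S(n)$, where $N$ is large and the angles $\alpha_1, \ldots, \alpha_N$ are chosen so that (i) no two points of $\BB$ differ in argument by a forbidden amount $\frac{2\pi m}{n}$ (so that $\BB \in \Age(\SSSS(n))$), and (ii) the set $\{\alpha_k \bmod 2\pi\}$ is $\epsilon$-dense on the circle for some $\epsilon$ strictly smaller than the minimal angular gap between consecutive points of $A$. By the reversal procedure of Proposition 2, any expansion $\BB^i$ is determined by a single quadrant-boundary angle $\theta$, and the $\Age(\QQ_n)$-structure induced on $e^{i\alpha_k} A \subseteq \BB^i$ is isomorphic via the bijection $a \mapsto e^{i\alpha_k} a$ to the expansion of $\AA$ obtained from boundary angle $\theta - \alpha_k \pmod{2\pi}$. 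Density condition (ii) then ensures that as $k$ ranges over $1, \ldots, N$, the residues $\theta - \alpha_k$ meet every one of the $n|A|$ regions on which the induced labeled partition of $A$ is constant; in particular every one of the $m(\AA)$ $\Age(\QQ_n)$-isomorphism classes of expansions of $\AA$ is realized inside $\BB^i$, and the homogeneous condition holds.

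The main obstacle is reconciling conditions (i) and (ii). I would start from the equally spaced base values $\alpha_k^0 = \frac{2\pi (k-1)}{N}$ with $N$ large relative to $n|A|$, and then perturb each $\alpha_k$ sequentially by an arbitrarily small generic amount to avoid the finitely many forbidden angular differences among points of $\bigcup_k e^{i\alpha_k} A$. Provided each perturbation stays strictly smaller than half the minimal angular gap of $A$, condition (ii) is preserved, $\BB$ is a valid object of $\Age(\SSSS(n))$, and Theorem 3.2 fires. Combining with the upper bound yields $t_{\Age(\SSSS(n))}(\AA) = m(\AA) = \frac{n|A|}{|\Aut(\AA)|}$, as claimed.
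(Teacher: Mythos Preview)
Your proposal is correct and follows the same overall skeleton as the paper --- the upper bound via Theorem~4.1 together with the Ramsey property of $\Age(\QQ_n)$, and the lower bound via the homogeneous condition of Theorem~4.2 --- but your witness $\BB$ for the homogeneous condition is genuinely different from the paper's. The paper takes $\BB = \CC_m$, the set of $nm+1$ equally spaced points $\{e^{2k\pi i /(nm+1)} : 0\le k\le nm\}$; the key observation there is that $\CC_m$ has a cyclic automorphism group of order $nm+1$, so $m(\CC_m)=n$, and the $n$ expansions are then listed explicitly and each is shown to contain every $n$-partitioned linear order of size at most $m$. Your $\BB$ is instead a union of rotated copies of $\AA$ itself, and you exploit the rotation-equivariance of the reversal procedure directly: the expansion of the $k$th copy induced by boundary angle $\theta$ is the expansion of $\AA$ with boundary angle $\theta-\alpha_k$, so density of the $\alpha_k$ forces every expansion of $\AA$ to appear inside every expansion of $\BB$. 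Your argument is more geometric and sidesteps the automorphism count and explicit enumeration of expansions of $\BB$ that the paper carries out; the price is the (routine) generic-perturbation step to avoid forbidden angular differences, which the paper dodges via the arithmetic accident that $nm+1$ is coprime to $n$.

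One small correction: the $\epsilon$ you need is not the minimal gap between consecutive points of $A$, but the minimal gap between consecutive elements of the critical set $\{\arg(a)+\tfrac{2\pi j}{n} : a\in A,\ 0\le j<n\}$ on the circle, since those are the endpoints of the $n|A|$ regions on which the labeled partition of $A$ is constant. You implicitly recognize this when you invoke the $n|A|$ regions, but the stated bound on $\epsilon$ should be adjusted to match; otherwise a point of $A$ lying very close to a quadrant boundary of another point could produce a region shorter than your $\epsilon$ and be missed.
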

\begin{proof}
	Take an arbitrary $\AA \in \Age(\SSSS(n))$. We need to find a $\BB \in \Age(\SSSS(n))$ such that any expansion $\AA^i$ of $\AA$ in $\Age(\QQ_n)$, embeds in to any expansion $\BB^j$ of $\BB$ in $\Age(\QQ_n)$. Consider the structures $\CC_m $ in $\SSSS(n)$ induced by the points $\{ e^{ \frac{2k\pi i}{nm+1}  } :  k=0,...,nm  \}  $. Since $C_m$ has at least $m$ points in each region $Q_k = \{ y: \text{arg}(y) \in (\frac{2\pi i k}{n} , \frac{2\pi i (k+1)}{n})  \}$, and since $\bigcup\limits_{m=1}^\infty\{ e^{ \frac{2k\pi i}{nm+1}  } :  k=0,...,nm  \}  $ is dense in the unit circle, there is always an $m$ large enough so that $\AA$ embeds into $C_m$ and $m>|A|$. Note that rotations of the form $e^{ \frac{2\pi i k}{nm+1} } $ are all automorphisms of $\CC_m$, so $|\Aut(\CC_m)| \geq nm+1 $. Moreover, we know of $n$ expansions of $\CC_m$. Namely, partitioned linear orders of the form $\{ (\{1,...,nm+1\},< ): \mathcal{P}_1^j,...,\mathcal{P}_n^j   \}$, j=1,...,n, where $<$ is the standard order and the partitions are
	\begin{align*}
	\mathcal{P}_k^j = \{ x \in \{1,...,nm+1 \} :  x \equiv  k+j \text{ mod }n  \}
	\end{align*}
	So, $m(\CC_m) \geq n$. But by Proposition $2$, this means that $\frac{n (nm+1) }{ \Aut(\CC_m)} \geq n \Rightarrow \Aut(\CC_m) \leq nm+1$. So, $\Aut(\CC_m) = nm+1$ and thus $m(\AA) = n$ which means our above expansions are actually an exhaustive list. Finally, since $m> |A|$, and each $\mathcal{P}_k^j$ has at least $m$ elements, any n-partitioned linear order embeds into $\{ (\{1,...,nm+1\},< ): \mathcal{P}_1^j,...,\mathcal{P}_n^j   \}$ for all $j$. We will show this briefly. After doing so, we are done.\\
	\\
	Suppose $\XX$ is an extension of $\AA$. So, $\XX = \{(\{x_1,...,x_{|A|} \} , \prec) U_1,.., U_n   \} $ where $x_a \prec x_b \iff  a<b $. We define a map $ f: \XX \rightarrow \{ (\{1,...,nm+1\},< ): \mathcal{P}_1^j,...,\mathcal{P}_n^j   \} $ as follows. If $x_a \in U_b$, then $f(x_a)$ will be the $a$th member of $\mathcal{P}_a^j $. This is well defined as each $\mathcal{P}_k^j$ has at least $m$ members and $m> |A|$. Moreover, it is clear that $f$ respects partitions i.e it sends members from the $b$th piece to members of the $b$th piece. However, since it is always the case that if $a<b$ then the $a$th member of $ \mathcal{P}_k^j$ is always less than the $b$th member of $\mathcal{P}_c^j $ (regardless of $k$ and $j$), $f$ also respects $\prec$. That is, if $x_a \prec x_b$, then $f(x_a) < f(x_b)$. Therefore, $f$ is an embedding. As this held for any $j$, any expansion of $\XX$ embeds in to any expansion of $\CC_m$ (for appropriately chosen $m$). 
\end{proof}
\begin{corollary}
	We have the following equality. 
	\begin{align*}
	\frac{N}{(n-1)!}\sum\limits_{\substack{\AA \in \text{Age}(\SSSS(n)) \\ |A|= N}} \frac{1}{|\text{Aut}(\AA)|} &=   n^N
	\end{align*}
\end{corollary}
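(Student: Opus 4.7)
The plan is to prove the identity via a double-counting argument built on Proposition 2. First, I would substitute $m(\AA) = \frac{n|A|}{|\Aut(\AA)|}$ for each isomorphism class $\AA$ of cardinality $N$, so that
\begin{align*}
\sum_{\substack{\AA \in \Age(\SSSS(n)) \\ |A|=N}} m(\AA) \;=\; nN \sum_{\substack{\AA \in \Age(\SSSS(n)) \\ |A|=N}} \frac{1}{|\Aut(\AA)|}.
\end{align*}

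Second, I would evaluate the same sum combinatorially by passing through $\Age(\QQ_n)$. The functor $p : \Age(\QQ_n) \to \Age(\SSSS(n))$ of Proposition 1 is a categorical precompact expansion, so it is surjective on objects and every $\XX \in \Age(\QQ_n)$ has a well-defined underlying iso class $p(\XX) \in \Age(\SSSS(n))$ of the same cardinality. Hence the fibers of $p$ partition the size-$N$ isomorphism classes of $\Age(\QQ_n)$, and $\sum_{|A|=N} m(\AA)$ counts precisely the size-$N$ objects of $\Age(\QQ_n)$ up to isomorphism. Using the enumeration $n!\,n^N$ for this total (asserted in the discussion immediately preceding the corollary, obtained by a direct count of ways to equip an $N$-element universe with a linear order together with an ordered partition into $n$ labelled blocks) I obtain the second evaluation.

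Equating the two gives $nN \sum \frac{1}{|\Aut(\AA)|} = n!\,n^N$, and dividing by $n! = n(n-1)!$ produces the stated identity. The one nontrivial input is confirming the combinatorial count $n!\,n^N$ of size-$N$ iso classes in $\Age(\QQ_n)$; once that is granted, everything else is algebraic manipulation of Proposition 2, which itself is essentially an orbit--stabilizer count for the $\Aut(\AA)$-action on the $n|A|$ quadrant placements produced by the reversal procedure of Proposition 1. Thus the corollary is an immediate consequence of combining the explicit formula for $m(\AA)$ with the enumeration of partitioned linear orders, and requires no additional Ramsey-theoretic or categorical machinery beyond what has already been developed.
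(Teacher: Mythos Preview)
Your proposal is correct and matches the paper's intended derivation: the corollary is stated without proof, but the surrounding text makes clear it follows by combining Proposition~2's formula $m(\AA)=n|A|/|\Aut(\AA)|$ with the enumeration $\sum_{|A|=N} m(\AA)=n!\,n^{N}$ asserted in the discussion after Corollary~5.1.1, which is exactly the double count you carry out. Your justification of the equality via the fibers of $p$ partitioning the size-$N$ objects of $\Age(\QQ_n)$ is in fact cleaner than the paper's remark that equality requires the expansion-property condition of Theorem~4.2.
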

We now conclude with a result about big Ramsey degrees.
\begin{theorem}
	For any $\AA \in \text{Age}(\SSSS(n))$, $T_{\text{Age}(\SSSS(n))}(\AA) = m(\AA)\tan^{(2|A|-1)}(0)  $.
\end{theorem}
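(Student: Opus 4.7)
The plan is to apply Theorem 3.3 directly, with $\KK_1 = \Age(\QQ_n)$, $\KK_2 = \Age(\SSSS(n))$, and the forgetful functor $U = p$ constructed in Proposition 1 of this section. The Fra\"iss\'e limits are $\FF_1 = \QQ_n$ and $\FF_2 = \SSSS(n)$. From the list in section 2.2, the big Ramsey degrees of $\Age(\QQ_n)$ are all finite, with $T_{\Age(\QQ_n)}(\XX) = \tan^{(2|X|-1)}(0)$; moreover, expansion does not alter the underlying set, so $|A^i| = |A|$ for every expansion $\AA^i$ of $\AA$, and thus $T_{\Age(\QQ_n)}(\AA^i) = \tan^{(2|A|-1)}(0)$ independently of $i$.

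The non-routine step is verifying the hypothesis of Theorem 3.3, namely that $\binom{\QQ_n}{\QQ_n} = \binom{\SSSS(n)}{\SSSS(n)}$, equivalently that $\QQ_n$ is the unique expansion of $\SSSS(n)$. One direction is automatic: Lemma 2.6 yields $p(\QQ_n) = \SSSS(n)$, and applying $p$ to a copy of $\QQ_n$ inside $\QQ_n$ produces a copy of $\SSSS(n)$ inside $\SSSS(n)$, giving the inclusion $\binom{\QQ_n}{\QQ_n} \subseteq \binom{\SSSS(n)}{\SSSS(n)}$. For the reverse inclusion I would argue that any expansion $\FF$ of $\SSSS(n)$ in $\sigma\Age(\QQ_n)$ is a countable structure with $\Age(\FF) = \Age(\QQ_n)$, and is ultrahomogeneous: given any partial isomorphism between finite substructures of $\FF$, use the ultrahomogeneity of $\SSSS(n)$ together with the (Ref) lifting for $p$ established in Lemma 2.2 and Theorem 2.3 to extend it to a full automorphism of $\FF$. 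By the uniqueness clause of the Fra\"iss\'e theorem, $\FF \cong \QQ_n$.

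The main obstacle is this uniqueness step, because $\Aut(\SSSS(n))$ is strictly larger than $\Aut(\QQ_n)$; in particular, rotations of $\SSSS(n)$ preserve the $\sigma_k$ but permute the partition classes of the reversal expansion cyclically. The key observation rescuing us is that one can always precompose with a rotation that restores the canonical labeling, so such a rotated copy still lies in $\binom{\QQ_n}{\QQ_n}$ up to isomorphism of the expansion. Concretely, the counting argument used in Proposition 2 (that $\AA$ has exactly $n|A|/|\Aut(\AA)|$ expansions) already manages this redundancy, and the same bookkeeping applied to $\SSSS(n)$ itself, combined with the fact (used in Lemma 2.4) that all Fra\"iss\'e sequences are isomorphic, forces the expansion of $\SSSS(n)$ to be unique up to isomorphism.

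With the hypothesis verified, Theorem 3.3 immediately yields
\begin{align*}
T_{\Age(\SSSS(n))}(\AA) &= \sum_{i < m(\AA)} T_{\Age(\QQ_n)}(\AA^i) = m(\AA) \cdot \tan^{(2|A|-1)}(0).
\end{align*}
Invoking Proposition 2, which gives $m(\AA) = n|A|/|\Aut(\AA)|$, matches the stated formula and completes the proof.
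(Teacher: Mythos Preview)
Your overall plan is exactly the paper's: invoke the big Ramsey transfer theorem (Theorem 4.3 in the paper's numbering) with $\KK_1=\Age(\QQ_n)$, $\KK_2=\Age(\SSSS(n))$, and $U=p$, then use $T_{\Age(\QQ_n)}(\XX)=\tan^{(2|X|-1)}(0)$ to get the formula. The only substantive verification is that $\QQ_n$ is the unique expansion of $\SSSS(n)$, and that is where your argument and the paper's diverge.

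Your proposed route to uniqueness has a genuine gap. You want to show that an arbitrary expansion $\FF$ of $\SSSS(n)$ is ultrahomogeneous by pushing a finite partial isomorphism down to $\SSSS(n)$, extending there, and lifting back via (Ref). But (Ref) only promises that \emph{some} expansion of the domain admits a lift into $\FF$; it does not say that the lift is an endomorphism of $\FF$ itself. Getting the lift to land back in $\FF$ is precisely equivalent to knowing the expansion is unique, so the argument is circular. Your attempted repair (``precompose with a rotation that restores the canonical labeling'' and ``the counting argument of Proposition~2 applied to $\SSSS(n)$'') does not close this gap: the counting $m(\AA)=n|A|/|\Aut(\AA)|$ is a statement about finite $\AA$ and has no meaning for $\SSSS(n)$, and ``all Fra\"iss\'e sequences are isomorphic'' concerns sequences converging to a \emph{fixed} limit, not distinct candidate expansions.

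The paper avoids this entirely by a direct concrete computation. It observes that the definition of $p$ is set-size agnostic, so one can run the reversal procedure of Proposition~1 on $\SSSS(n)$ itself: fix the quadrants $Q_k=\{x:\arg(x)\in(\tfrac{2\pi(k-1)}{n},\tfrac{2\pi k}{n})\}$, rotate each $Q_k$ back into $Q_1$ to form $\mathcal{P}_k^{\XX}$, and order by $\sigma_0$. The rotation $x\mapsto e^{-2\pi i(k-1)/n}x$ is shift-invariant for $\arg(x/y)$, so it carries $(Q_k,\sigma_0)$ order-isomorphically onto $(\mathcal{P}_k^{\XX},<^{\XX})$; since each $Q_k$ is a countable dense linear order without endpoints, so is each $\mathcal{P}_k^{\XX}$, and hence $\XX\cong\QQ_n$. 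Because every expansion arises from some choice in the reversal procedure, this shows every expansion of $\SSSS(n)$ is isomorphic to $\QQ_n$, giving $\binom{\QQ_n}{\QQ_n}=\binom{\SSSS(n)}{\SSSS(n)}$ directly. Replacing your abstract Fra\"iss\'e argument with this explicit verification would make your proof complete and essentially identical to the paper's.
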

\begin{proof}
	It is clear that the big Ramsey degree of any $\XX \in \text{Age}(\QQ_n)$ is $\tan^{(2|X|-1)}(0)$, so it suffices to show that extending $p$ to $ \sigma \text{Age}(\QQ_n)$ will ensure $p$ satisfies the conditions of theorem 4.3. It is clear that the way we defined $p$ did not depend on the input set $\XX$ being finite. For this reason, extending $p$ to a map from $\overline{\text{Age}(\QQ_n)}$ to $\overline{Age (\SSSS(n))} $ is trivial. We just define $p$ as we have before on countable structures, and this will match with how $p$ ought to be extended as seen in section 3. The more tricky thing to show is that $\QQ_n$ is the only expansion of $\SSSS(n)$. \\
	\\
	It is sufficient to show that doing the reversal procedure (outlined in proposition 2) to $\SSSS(n)$ will grant us a model isomorphic to $\QQ_n$.\\
	\\
	Split $S(n)$ in to $n$ quadrants defined by $Q_k = \{ x\in S(n) : \text{arg}(x) \in (\frac{2\pi (k-1) }{n},\frac{2\pi k }{n} ) \}$ for $k=1,..,n$. Define a partition
	\begin{align*}
	\mathcal{P}_k^{\XX} &= \{e^{-\frac{2\pi i(k-1)}{n}}x : x\in  Q_k  \}
	\end{align*}
	The model $\XX = (X,\mathcal{P}_1^{\XX},...,\mathcal{P}_n^{\XX}, <^{\XX}) $ where $x<^{\XX}y$ if and only if $\text{arg}(\frac{x}{y}) \in (0, \frac{2\pi}{n})$. So, $X$ paired with $<^{\XX}$ can be identified with a countable dense subset of $(0,\frac{2\pi}{n}) $, and hence is isomorphic to $\QQ$. It suffices now to show that $<^{\XX} \upharpoonright{\PP_k^{\XX}}$ is also a dense linear order without endpoints. This is not hard to show as $Q_k $ is a dense linear order without endpoints, where our order is defined by $\sigma_0^{\SSSS(n)}$. Consider the bijection $f:Q_k \rightarrow \mathcal{P}_k^{\XX} $ defined by $x\rightarrow e^{-\frac{2\pi i(k-1)}{n} }x$. Since $g: \SSSS^1\times \SSSS^1 \rightarrow \SSSS^1$ defined by $g(x,y) =\text{arg}(\frac{x}{y})$ is invariant under shifts, i.e $g(x,y) = g(e^{i\theta}x, e^{i\theta}y)$, we have
	\begin{align*}
	\sigma_0^{\SSSS(n)}(x,y) &\iff \text{arg}(\frac{x}{y}) \in (0,\frac{2\pi}{n})\\
	&\iff \text{arg}(\frac{f(x)}{f(y)}) \in (0,\frac{2\pi}{n})\\
	&\iff f(x) <^{\XX} f(y) 
	\end{align*}
	So, $f$ is an isomorphism between linear orders. Since $(Q_k, \sigma_0^{\SSSS(n)})$ is isomorphic to $\QQ$, so is each $\mathcal{P}_k^{\XX}$ with respect to $<^\XX$. Consequently, $ \XX$ is isomorphic to $\QQ_n$, meaning any expansion of $\SSSS(n)$ is isomorphic to $\QQ_n$ and so, ${{\QQ_n}\choose{\QQ_n}} = {{\SSSS(n)}\choose{\SSSS(n)}}$. 
\end{proof}
\section{Acknowledgements}
The author would like to thank Lionel Nguyen van Th\'e, Natasha Dobrinen and Wieslaw Kubis for the insightful comments they gave at the 50 years of Set Theory in Toronto conference. The author would also like to thank his supervisor Stevo Todorcevic for his guidance. 


\begin{thebibliography}{9}
	\bibitem{KPT} 
	A. S. Kechris, V. G. Pestov, and S. Todorcevic. 
	\textit{ Fra{\"i}ss{\'e} limits, Ramsey theory, and
		topological dynamics of automorphism groups}. 
	Geom. Funct. Anal. \textbf{15} (2005), no. 1,
	106–189.
	
	\bibitem{Kubis}
	W. Kubi\'s. \textit{Fra\"iss\'e sequences: category-theoretic approach to universal homogeneous structures}. 
	Annals of Pure and Applied Logic, Volume 165, Issue 11, November 2014, Pages 1755-1811
	\bibitem{ColouredMilliken}
	C. Laflamme, L. Nguyen Van Th{\'e}, N. W. Sauer. \textit{Partition properties of the dense local order and a colored version of Milliken's theorem}. 
	Combinatorica, 30 (1), 83-104, 2010. 	
	
	\bibitem{BigRamsey}
	D. Masulovic. \textit{Finite big Ramsey degrees in universal structures}. 
	\\\texttt{ arXiv:1807.00658v2}
	
	\bibitem{PreadjRamsey}
	D. Masulovic. \textit{Pre-adjunctions and the Ramsey property
	}. 
	\\\texttt{arXiv:1609.06832v3}
	
	\bibitem{HomogSurvey} 
	H.D Macpherson. 
	\textit{A Survey on Homogeneous Structures}. 
	\\\texttt{http://ambio1.leeds.ac.uk/Pure/staff/macpherson/homog7.pdf}
	
	\bibitem{MetUniv} 
	J. Melleray, L. Nguyen van Th{\'e}, T. Tsankov.
	\textit{Polish groups with metrizable universal minimal flows}.
		International Mathematics Research Notices. IMRN, 5, 1285-1307, 2016 
	
	\bibitem{PrecompExp} 
	L. Nguyen van Th{\'e}. 
	\textit{More on the Kechris-Pestov-Todorcevic correspondence: precompact expansions}. 
	Fund. Math., 222, 19-47, 2013. 
	
	\bibitem{SokicPoset}
	M. Sokic. \textit{Ramsey Property of Posets and Related Structures
	} . Thesis, University of Toronto, 2011. 
	
	\bibitem{Zucker}
	A. Zucker. \textit{Amenability and Unique Ergodicity of Automorphism Groups of Fraïssé Structures}.
	arXiv:1304.2839
\end{thebibliography}
\end{document}